\newcommand\blfootnote[1]{%
  \begingroup
  \renewcommand\thefootnote{}\footnote{#1}%
  \addtocounter{footnote}{-1}%
  \endgroup
}
\theoremstyle{plain}
\newtheorem{theorem}{Theorem}[section]
\newtheorem{proposition}[theorem]{Proposition}
\newtheorem{lemma}[theorem]{Lemma}
\newtheorem{corollary}[theorem]{Corollary}
\newtheorem*{theorem*}{Theorem}
\newtheorem*{lemma*}{Lemma}
\theoremstyle{remark}
\newtheorem{remark}[theorem]{Remark}
\theoremstyle{definition}
\newtheorem{definition}{Definition}[section]
\newtheorem{example}[theorem]{Example}
\DeclareMathOperator{\im}{Im}
\DeclareMathOperator{\Hom}{Hom}
\DeclareMathOperator{\tr}{tr}
\DeclareMathOperator{\Tr}{Tr}
\DeclareMathOperator{\Nm}{Nm}
\DeclareMathOperator{\id}{id}
\DeclareMathOperator{\Ind}{Ind}
\DeclareMathOperator{\Gal}{Gal}
\DeclareMathOperator{\sgn}{sgn}
\DeclareMathOperator{\ch}{ch}
\DeclareMathOperator{\SL}{SL}
\DeclareMathOperator{\GL}{GL}
\DeclareMathOperator{\PGL}{PGL}
\DeclareMathOperator{\PSL}{PSL}
\DeclareMathOperator{\nrd}{nrd}
\DeclareMathOperator{\St}{St}
\DeclareMathOperator{\SO}{SO}
\DeclareMathOperator{\JL}{JL}
\DeclareMathOperator{\SU}{SU}
\DeclareMathOperator{\Mt}{Mat}
\DeclareMathOperator{\PSO}{PSO}
\DeclareMathOperator{\rdch}{rdch}
\DeclareMathOperator{\tp}{temp}
\DeclareMathOperator{\covol}{covol}
\DeclareMathOperator{\disc}{disc}
\DeclareMathOperator{\ram}{Ram}
\DeclareMathOperator{\N}{N}
\DeclareMathOperator{\nr}{nr}
\DeclareMathOperator{\Stb}{Stab}
\DeclareMathOperator{\Sp}{Sp}
\DeclareMathOperator{\rk}{rank}
\title{An Arithmetic Invariant of the Jacquet-Langlands correspondence}
\author{Jun Yang}
\date{}
\begin{document}
\maketitle
\begin{abstract} 
We describe the local-global compatibility of local Plancherel measures and the Tamagawa measure under the Jacquet-Langlands correspondence. 
We apply the notion of densities of modules over discrete groups, which generalizes the dimensions over discrete groups. 
We prove that the global Jacquet-Langlands correspondence preserves densities over principal arithmetic groups. 
\end{abstract}

\blfootnote{\Letter~ \href{mailto:junyang@fas.harvard.edu}{junyang@fas.harvard.edu}~~~~~~~Harvard University, Cambridge, MA 02138, USA}
\blfootnote{This work was supported in part by the ARO Grant W911NF-19-1-0302 and the ARO MURI Grant W911NF-20-1-0082.}

\tableofcontents

\section{Introduction}

In the 1970s, Jacquet and Langlands proposed a correspondence between the square-integrable irreducible representations of $\GL(2,F_v)$ over a local field $F_v$, and the finite-dimensional irreducible representations of the multiplicative group $D^{\times}(F_v)$ of a non-split 
quaternion $F_v$-algebra $D(F_v)$. 
This is the classical {\it local Jacquet-Langlands correspondence} for $\GL(2)$. 
If $\pi_v,\pi'_v$ correspond and $F_v$ is non-archimedean, they proved that the formal degree of $\pi_v$ and the usual complex dimension of $\pi'_v$ are always proportional. 
More precisely, if $\JL_v$ denotes the local Jacquet-Langlands correspondence from the unitary dual of $D^{\times}(F_v)$ to the one of $\GL(2,F_v)$, we have
\begin{equation}\label{eintrodeg}
\begin{aligned}
     \frac{\deg(\pi_v)}{\dim \pi'_v}=c_v, \text{~if~} \JL_{v}(\pi')=\pi,
\end{aligned}
\end{equation}
where  $c_v$ is a constant that does not depend on  $\pi_v$ or $\pi'_v$, but depends on the Haar measures on $\GL(2,F_v)$ and $D^{\times}(F_v)$. 

The local correspondence has been extended to $\GL(n)$ by Rogawski \cite{Rog80} and Deligne-Kazhdan-Vigneras \cite{DKV84}.  
Aubert and Plymen extend the property \eqref{eintrodeg} to the most general case that the Plancherel measures of corresponding tempered representations are also proportional, i.e.,
\begin{equation}\label{eintroP}
\begin{aligned}
    \frac{d \nu_{v}(\pi_v)}{d \nu'_{v}(\pi'_v)}=c_v, \text{~if~} \JL_{v}(\pi')=\pi,
\end{aligned}
\end{equation}
where $\nu_v$ (resp. $\nu'_v$) denotes the Plancherel measure of $\GL(n,F_v)$ (resp. an inner form $G'(F_v)$ of $\GL(n,F_v)$) and $c_v$ also depends only on the Haar measures.

On the other hand, the {\it global Jacquet-Langlands correspondence} matched certain automorphic representations of adelic $\GL(n)$ and those of its inner form, both over a global field $F$.
It is compatible with each local correspondence. 
It was also first proposed for $\GL(2)$ by Jacquet and Langlands and then generalized to $\GL(n)$ by Badulescu \cite{Badu08} (see also \cite{BaduRena10,BaduRoch2017}). 

One would expect a similar property as Equation \eqref{eintrodeg} or \eqref{eintroP} for the global correspondence. 
There are mainly two difficulties. 
\begin{enumerate}
    \item For automorphic representations of adelic groups, neither the formal degrees nor the global Plancherel measure is well-defined because of the divergence of the product of the local formal degrees and local Plancherel measures over all places. 
    \item Although we can freely renormalize local Haar measures, there is a unique canonical Haar measure, the Tamagawa measure, on the product of local groups--the adelic group. 
    This measure is defined naturally and studied in the proofs of the Weil conjecture on Tamagawa numbers (see \cite{Kott88}). 
\end{enumerate}
We may ask if there is a family of local Haar measures $\mu_v,\mu'_v$ on local groups such that $c_v=1$ in Equation \eqref{eintroP} for all $v$, which is compatible with the products $\prod\mu_v,\prod\mu'_v$ being Tamagawa measures?

To be more precise, we let $D$ be a central division algebra over a global field $F$ of index $d$ 
\footnote{The central division $F$-algebra $D$ is called of index $d$ if $\dim_F D=d^2$}. 
Consider the $F$-groups $G=\GL(nd,F)$ and its inner form $G'=\GL(n,D)$. 
For each place $v$, we know $\Mt(n,D)\otimes_{F}F_v\cong \Mt(n_v,D_v)$ for some $F_v$-division algebra $D_v$ of index $nd/n_v$.  
Let $G_v=\GL(nd,F_v)$ and $G'_v=\GL(n_v,D_v)$. 
For any given Haar measures $\mu_v,\mu'_v$ on $G_v,G'_v$, we let $\nu_v,\nu'_v$ be the associated Plancherel measure on the tempered duals. 
The following compatibility result is first proved (see Theorem \ref{tpreallPm}).
\begin{lemma}\label{lmain1}
There exists local Haar measures $\mu_v,\mu'_v$ on $G_v,G'_v$ respectively such that 
\begin{enumerate}
    \item $\prod \mu_v,\prod \mu'_v$ are the Tamagawa measures on $\GL(nd,\mathbb{A})$ and $\GL(n,D(\mathbb{A}))$ respectively;
    \item the local Jacquet-Langlands correspondence preserves the associated Plancherel measure, i.e., for each $v$, 
    \begin{center}
        $d \nu_{v}(\pi_v)=d \nu'_{v}(\pi'_v)$ 
    \end{center} 
    if $\pi_v$ and $\pi'_v$ correspond. 
\end{enumerate}
\end{lemma}

Based on this result, we consider square-integrable representations as well as tempered representations as modules over the arithmetic subgroups and their group von Neumann algebras. 
For a discrete group $\Gamma$, 
we adapt the notion $d_{\Gamma}(\pi)$ for the $\Gamma$-density of a representation $(\pi,H)$ over $\Gamma$, which extends the definition of $\Gamma$-dimensions (see Section \ref{ssvndim}). 
The twisted version $d_{\Gamma,\sigma}(\pi)$ of $\Gamma$-density is also defined for $\sigma$-projective representations associated to a $2$-cocycle $\sigma$ of $\Gamma$ (see Defitinion \ref{dvnden}).

Let $S$ be a finite set of places of $F$ containing all archimedean ones. 
Define $G_S=\prod_{v\in S}G_v,G'_S=\prod_{v\in S}G'_v$ and
let $\Gamma_S=\PGL(nd,\mathcal{O}_S)$ (resp. $\Gamma'_S=\PGL(n,D(\mathcal{O}_S))$), which is the principal $S$-arithmetic subgroup of the semisimple group $G/Z(G)=\PGL(nd)$ (resp. $G'/Z(G')=\PGL(n,D)$).
Let $\pi=\otimes \pi_v,\pi'=\otimes \pi'_v$ be cuspidal automorphic representations of $G(nd,\mathbb{A})$ and $\GL(n,D(\mathbb{A}))$.
We let $\pi_S,\pi'_S$ be the $S$-components of $\pi,\pi'$, i.e., $\pi_S=\otimes_{v\in S}\pi_v$ and  $\pi'_S=\otimes_{v\in S}\pi'_v$. 
We let $\sigma(\pi_S)$ and $\sigma(\pi'_S)$ be the $2$-cocycles of $\Gamma_S,\Gamma'_S$ obtained by passing the $\pi_S,\pi'_S$ to the projective representations (see Section \ref{B2}). 
Here is the main result (see \ref{ttpvndim}). 
\begin{theorem}\label{tmain2}
Suppose $\pi,\pi'$ are tempered cuspidal automorphic representations of $\GL(nd,\mathbb{A})$ and $\GL(n,D(\mathbb{A}))$ that are matched by the global Jacquet-Langlands correspondence.   
We have
\begin{center}
    $d_{\Gamma_S,\sigma(\pi_S)}(\pi_S)=d_{\Gamma'_S,\sigma(\pi'_S)}(\pi'_S)$, 
\end{center}
if $S$ contains the ramification set of $G'$ and both $G_S,G'_S$ are non-compact modulo their centers. 
\end{theorem}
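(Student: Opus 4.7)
The plan is to factor each von Neumann dimension as a covolume times a product of local Plancherel densities, and then to match the two sides place-by-place using Theorem~\ref{tmain1}.

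First, I would establish a coupling-constant formula of Atiyah--Schmid type at the $S$-arithmetic level,
\[
\dim_{\mathcal{L}(\Gamma_S, \sigma(\pi_S))} \pi_S \;=\; \covol_{\mu_S}(\Gamma_S) \cdot \prod_{v \in S} d_{\nu_v}(\pi_v),
\]
and its analogue on the primed side, where $d_{\nu_v}(\pi_v)$ is the Plancherel density at $\pi_v$ and $\mu_S = \prod_{v\in S}\mu_v$. This formula should be thought of as generalizing the classical coupling between the formal degree of a discrete-series representation and its multiplicity as a $\Gamma$-module to the tempered tensor-product setting, twisted by the projective cocycle $\sigma(\pi_S)$ arising from the central character of $\pi_S$. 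Temperedness of each $\pi_v$, needed so that $d_{\nu_v}(\pi_v)$ makes sense as a pointwise density, is built into the hypothesis that $\pi$ is tempered cuspidal automorphic.

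Granting this formula, Theorem~\ref{tmain1}(2) yields $d\nu_v(\pi_v) = d\nu'_v(\pi'_v)$ for each $v \in S$, so the products of local densities match. For the covolumes, I would use that Theorem~\ref{tmain1}(1) arranges $\prod_v \mu_v$ and $\prod_v \mu'_v$ to equal the Tamagawa measures on $\GL(nd,\mathbb{A})$ and $\GL(n,D(\mathbb{A}))$, respectively. The isotropic hypothesis at some $v \in S$ guarantees strong approximation for $G'$ relative to $S$, so that $\Gamma'_S$ is a lattice in $G'_S/Z'$ and both $\covol_{\mu_S}(\Gamma_S)$ and $\covol_{\mu'_S}(\Gamma'_S)$ can be computed as global Tamagawa volumes divided by the product of local volumes of maximal compacts at $v \notin S$. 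Since $\PGL(nd)$ and $\PGL(n,D)$ are inner forms and hence share a Tamagawa number, and since the $v \notin S$ matching in Theorem~\ref{tmain1} forces the unramified volume contributions outside $S$ to cancel, the two covolumes coincide and the theorem follows.

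The main obstacle will be the coupling-constant formula in the first step. The classical Atiyah--Schmid theorem treats a single square-integrable representation of a real Lie group on a cocompact lattice, and its $p$-adic analogue treats formal degrees for discrete series; here one must combine both settings, handle a tempered (possibly non-discrete-series) representation of the product group $G_S/Z$, and correctly track the projective twist $\sigma(\pi_S)|_{\Gamma_S}$. Showing that the Plancherel density on $G_S/Z$ factorizes across places, and that the cocycles on the $\GL$ and inner-form sides are compatibly related via Jacquet--Langlands at the level of central characters, is the delicate technical point.
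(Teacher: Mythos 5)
Your proposal matches the paper's argument: it isolates the same three ingredients---a twisted, tempered Atiyah--Schmid formula $\dim_{\mathcal{L}(\Gamma_S,\sigma)}(H_X)=\covol(\Gamma_S)\cdot\nu_{G_S,\sigma}(X)$ (the paper's Theorem~\ref{tdimmeas}, combined with the Plancherel formula for central extensions to handle the cocycle coming from the central character), the local matching of Plancherel densities $\dd\nu_v=\dd\nu'_v$ from Theorem~\ref{tmain1}, and the equality of covolumes $\mu_S(G_S/\Gamma_S)=\mu'_S(G'_S/\Gamma'_S)$ established via Prasad's volume formula for $S$-arithmetic lattices of inner forms (Proposition~\ref{pequalSvol}, which indeed uses the isotropy hypothesis). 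You also correctly flag the generalized Atiyah--Schmid coupling formula and the compatibility of the projective twists as the genuinely novel technical content, which is precisely where the paper invests its effort.
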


Note that the Ramanujan-Petersson conjecture predicts that all the cuspidal automorphic representations are tempered.

Let us outline the proof of these results as well as the article. 
We review some basic facts in Section \ref{spre} for the division algebras, the Jacquet-Langlands correspondences, von Neumann dimensions, and $\Gamma$-densities.  
We then discuss the local correspondences for the archimedean and non-archimedean places in Section \ref{sJLmeas}. 
Then Lemma \ref{lmain1} is proved at the end of Section \ref{sJLmeas} and Theorem \ref{tmain2} is proved in Section \ref{sJLinv}.

\section{Preliminaries}\label{spre}

We review some basic facts about division algebras, the Jacquet-Langlands correspondences of $\GL(n)$, the Plancherel formula for central extensions, von Neumann dimensions, and introduce the density over discrete groups. 
Some notations are introduced and will be used in the next sections. 


\subsection{Division algebras over global fields}\label{ssdivnumfield}
 
Let $F$ be a global field and $\mathcal{O}_F$ be the ring of integers of $F$. 
Let $V$ be the set of places of $F$, $V_{\infty}$ be the set of infinite places of $F$ (which is empty if $F$ is a function field), and $V_{f}$ be the set of finite places of $F$. 
For a place $v$ of $F$, we let $F_v$ be the completion of $F$ with respect to $v$.
For $v\in V_f$, let $P_v$ be the prime ideal in $\mathcal{O}_F$ associated with $v$ and $\mathcal{O}_v$ be the ring of integers of $F_v$.  
Let $\mathfrak{m}_v$ be its maximal ideal and $k(v)=\mathcal{O}_v/\mathfrak{m}_v\cong\mathcal{O}_F/P_v$ be its residue field. 
Let $q_v$ be the cardinality of $k(v)$.

Let $A$ be a finite-dimensional simple algebra over $F$. 
By the Wedderburn-Artin theorem (see \cite[Theorem 7.4]{Rei03}), we have $A\cong \Mt(n,D)$ for some division algebra $D$ over $F$ such that the center $Z(D)\cong F$.  
We know that $\dim_F D=d^2$ for some positive integer $d$.

For each $v\in V$, let $A_v=A\otimes_F F_v$. 
There exists a division algebra $D_v$ 
\footnote{Note $D_v$ is not the algebra $D\otimes_{F}F_v$ in general unless $n=1$. } 
over $F_v$ such that
\begin{center}
    $A_v\cong \Mt(n_v,D_v)$
\end{center}
for some $n_v\geq 1$, which is a multiple of $n$ such that $nd=n_v d_v$ ($\dim_{F_v}D_v=d_v^2$). 
We say $A$ (or $A^{\times}$) {\it splits at $v$} if $d_v=1$ (or, equivalently, $n_v=nd$), in which case $A_v\cong \GL(nd,F_v)$. 
We let $G'=A^{\times}=\GL(n,D)$ and
\begin{itemize}
    \item $\ram(G')$ = the set of places at which $A$ (or $G'$) does not split.
\end{itemize}
It is known that $\ram(G')$ is a finite subset of $V$. 
For $v\in V_f$, the following notations will be used throughout this paper (see \cite{Voight21}).  
Recall that an order is an $\mathcal{O}_v$-lattice as well as a sub-ring. 
\begin{itemize}
    \item $O(D_v),O(A_v)$ = the maximal order of $D_v$ and $A_v$, respectively. 
    \item $J(D_v),J(A_v)$ = the maximal ideal of $O(D_v)$ and $O(A_v)$, respectively. 
\end{itemize}

In the rest of this section, we assume $K=F$ or $F_v$ for some $v\in V$ and denote the $K$-algebra $A$ or $A_v$ simply by $A$. 
For $a\in A$, let $\lambda_a\in \Hom_{K}(A,A)$ be the left multiplication by $a$. 
We denote by $\ch_{A/K}(a;X)$ be the characteristic polynomial of $\lambda_a$ and  write $\ch_{A/K}(a;X)=\sum_{0\leq i\leq m}c_i(a) X^i$ with each $c_i(a)\in K$ and $c_m(a)=1$. 
We follow \cite[\S 2]{Rei03} for these definitions and notations.  
\begin{enumerate}
    \item The \textbf{trace} $\Tr_{A/K}\colon A\to K$ is defined to be $\Tr_{A/K}(a)=-c_{m-1}(a)$;
    \item The \textbf{norm} $\Nm_{A/K}\colon A\to K$ is defined to be $\Nm_{A/K}(a)=(-1)^m c_{0}(a)$. 
\end{enumerate}
It is known that there exists a finite extension $K_1$ of $K$ such that $A\otimes_K K_1\cong \Mt(m,K_1)$. 
The reduced characteristic polynomial $\rdch_{A/K}(a;X)$ is defined to be the characteristic polynomial of $a\otimes 1$ under this isomorphism. 
It is known that $\rdch(a;X)\in K[X]$ for each $a\in A$ and $\rdch_{A/K}(a;X)^m=\ch_{A/K}(a;X)$. 
\begin{enumerate}
\setcounter{enumi}{2}
    \item The \textbf{reduced trace} $\tr\colon A\to K$ is defined by $\tr(a)=\Tr(a\otimes 1)=\frac{\Tr(a)}{m}$.
    \item The \textbf{reduced norm} $\nr\colon A\to K$ is defined by $\nr(a)=\Nm(a\otimes 1)=\Nm(a)^{1/m}$.
\end{enumerate}


Let $R$ be a noetherian domain and $K$ be the fraction field of $R$.
Let $B$ be a semisimple algebra over $K$ and $I
\subset B$ be an $R$-lattice. 
Suppose $\dim_K B=m$. 
For $\alpha_1,\cdots,\alpha_m\in B$, we let $d(\alpha_1,\cdots,\alpha_m)=\det([\tr(\alpha_i\alpha_j)]_{i,j})$.  

\begin{enumerate}
\setcounter{enumi}{4}
\item the \textbf{discriminant} $\disc_{R}(I)$ is the $R$-submodule of $K$ generated by the set
\begin{center}
    $\{d(\alpha_1,\cdots,\alpha_m)\}$. 
\end{center}
It can be shown that if $I$ has a free $R$-basis $e_1,\cdots,e_m$, then $\disc_R(I)=\det([\tr(\alpha_i\alpha_j)]_{i,j})\cdot R$, which is a principal ideal (see \cite[\S 10]{Rei03}). 

\item the \textbf{absolute norm} $\N(\mathfrak{a})$ of a fractional ideal $\mathfrak{a}$ of $R$ is defined to be $\N(\mathfrak{a})=[R\colon \mathfrak{a}]$.


\item the \textbf{norm} on $A_v$ is defined to be $\|x\|_{A_v}=\|\Nm_{A_v/F_v}(x)\|_{F_v}$. 
\end{enumerate}

\subsection{The Jacquet-Langlands correspondences of $\GL(n)$}

Let $D$ be a central division algebra over $F$ of $F$-dimension $d^2$. 
Let $G=\GL(nd,F)$, $G'=A^{\times}=\GL(n,D)$ and $\mathbb{A}$ be the adele ring of $F$. The following notations will be used throughout this paper.
\begin{itemize}
    \item $G_v=\GL(nd,F_v)$ and $G_{\mathbb{A}}=\GL(nd,\mathbb{A})$
    \item $G'_v=A_v^{\times}=\GL(n_v,D_v)$ and $G'_{\mathbb{A}}=\GL(n,D(\mathbb{A}))$. Note that $\dim_{F_v}D_v=d_v^2$ and $n_v d_v =nd$,  
    \item $Z(\mathbb{A})=$ the center of $G_{\mathbb{A}}$, which is also the center of $G'_{\mathbb{A}}$.  
    \item $\Pi_{0}(G),\Pi_{0}(G')$ = the set of equivalence classes of irreducible cuspidal subrepresentations of $L^2(G(F)Z(\mathbb{A})\backslash
    G(\mathbb{A}))$ and $L^2(G'(F)Z(\mathbb{A})\backslash
    G'(\mathbb{A}))$ respectively. 
    \item $\Pi(H),\Pi_{\tp}(H),\Pi_{2}(H)$ \footnote{If the center of a group $H$ is not compact, there will be no square-integrable representation and we usually take the irreducible representations which are square-integrable irreducible modulo the center of $H$} = the set of equivalence classes of unitary, tempered and square-integrable irreducible representations of a group $H$ respectively. 
    \item $\mathcal{H}(G_v)$ = the Hecke algebra of the group $G_v$ (the algebra of smooth or locally constant compactly supported complex functions on $G_v$ according to $F_v$ is archimedean or not, with the multiplication given by the convolution). 
    $\mathcal{H}(G'_v)$ is defined similarly. 
\end{itemize}

For a linear algebraic group $H$ over a local field $K$, 
we let $H^s$ be the set of regular semisimple \footnote{An element $g\in H$ is called {\it regular semisimple} if the characteristic polynomial of $g$ has distinct roots over an algebraic closure of $K$} elements of $H$. 
For a unitary irreducible representation $\pi$ of $H$, 
we let $\theta_{\pi}$ be the character of $\pi$ defined on the set $H^s$ which is given by 
\begin{center}
   $\theta_{\pi}\stackrel{\rm df}{=}\tr\pi(f)=\int_{H^s}\theta_{\pi}(g)f(g)dg$,  
\end{center}
where $f$ belongs to the Hecke algebra of $H$.  
For $g\in G_v$ and $g'\in G'_v$, we say $g$ corresponds to $g'$, denoted by $g\leftrightarrow g'$, if they have the same characteristic polynomial over $F_v$. 
\begin{theorem}
\begin{enumerate}
    \item For each $v\in V$, there exists a unique injective map
    \begin{center}
        $\JL_v\colon \Pi(G'_v)\to \Pi(G_v)$
    \end{center}
    such that for each $\pi'_v\in \Pi(G'_v)$ we have $\theta_{\pi'_v}(g)=(-1)^{nd-n}\theta_{\JL_{v}(\pi'_v)}(g')$ for all $g\in G_v$ and $g'\in G'_v$ such that $g'\leftrightarrow g$. 
    \item There exists a unique map 
    \begin{center}
        $\JL\colon \Pi_{0}(G') \to \Pi_{0}(G)$
    \end{center}
    such that if $\pi=\otimes'_{v\in V} \pi_v=\JL(\pi')$ for some $\pi'=\otimes'_{v\in V}\pi'_v$,  
    then
    \begin{enumerate}
        \item $\JL_v(\pi_v)=\pi'_v$ if $v\in \ram(G')$;
        \item $\pi_v\cong \pi'_v$ if $v\notin \ram(G')$. 
    \end{enumerate}
\end{enumerate}
\end{theorem}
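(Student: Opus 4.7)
The plan is to prove this theorem by invoking and assembling results from the existing literature on the Jacquet-Langlands correspondence, since both parts are well-established. Part 1 (the local statement) is due, in the non-archimedean setting, to Deligne, Kazhdan, and Vigneras \cite{DKV84}, with earlier work by Rogawski \cite{Rog80}; part 2 (the global statement) is the main result of Badulescu \cite{Badu08}, building on Arthur's simple trace formula. The present task is therefore less about original proof than about organizing these inputs so that the conventions (direction of the map, the sign $(-1)^{nd-n}$, choice of test function space $\mathcal{H}$) match those used elsewhere in the paper.

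For part 1 at a non-archimedean $v$, I would proceed as follows. The heart of the construction is a matching of orbital integrals: for each $f \in \mathcal{H}(G_v)$ there exists $f' \in \mathcal{H}(G'_v)$ such that the orbital integral of $f'$ at any regular semisimple $g' \in G'_v$ equals, up to the sign $(-1)^{nd-n}$ coming from the comparison of Weyl discriminants, the orbital integral of $f$ at any $g \in G_v$ with $g \leftrightarrow g'$. Dualizing via the trace Paley--Wiener theorem transfers this matching to a correspondence on the level of tempered characters, and this is the source of the character identity $\theta_{\pi}(g) = (-1)^{nd-n}\theta_{\JL_v(\pi_v)}(g')$. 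Uniqueness of $\JL_v$ then follows from linear independence of characters on $H^s$ (the Harish-Chandra regularity theorem ensures $\theta_\pi$ is a locally integrable function determining $\pi$), and the same identity forces injectivity. The archimedean case is treated analogously via $(\mathfrak{g}, K)$-module classification of inner forms and Harish-Chandra's character theory, extended to $\GL(n)$ inner forms by Badulescu--Renard.

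For part 2, the plan is to invoke the simple trace formula for $G$ and for $G'$. At each place $v \in S$ one chooses test functions $f_v$ and $f'_v$ matched through part 1, and at each $v \notin S$ one chooses compatible spherical functions (using $G_v \cong G'_v$). The resulting identity of global distributions, once one projects onto the discrete cuspidal spectrum, produces an injection of cuspidal representations of $G'$ into those of $G$, with the two required local compatibilities built in. Badulescu's argument requires care in separating the cuspidal contribution from the residual and Eisenstein parts, but this is carried out in detail in \cite{Badu08,BaduRena10}.

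The main conceptual obstacle, were one to reprove these results from scratch, would be establishing the fundamental orbital integral matching at non-archimedean places and the associated density of matched functions; these rest on deep work on the geometric side of the trace formula. Since that work is complete in the references, my proof amounts to setting up the notation so that the direction of the map ($G_v \to G'_v$ locally, $\Pi_0(G') \to \Pi_0(G)$ globally), the sign convention, and the test-function classes in the theorem agree with the versions proved in loc.\ cit.
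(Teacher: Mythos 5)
The paper offers no proof of this theorem beyond a two-sentence citation to Jacquet--Langlands, Badulescu--Renard, and Badulescu--Roche, and your proposal does exactly the same thing — defer to the same literature — while adding a brief (accurate) sketch of how the orbital-integral matching and simple trace formula arguments go in those references. So your approach matches the paper's; the extra detail you supply is correct and does not alter the route.
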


This theorem is proved for $\GL(2)$ by Jacquet and Langlands in \cite{JL70} and extended to $\GL(n) $ by Badulescu and Renard in \cite{Badu08,BaduRena10} for number fields.
The function field case is proved by Badulescu and Roche in \cite{BaduRoch2017}.

\subsection{The Plancherel formula for central extensions}\label{B2}

Let $G$ be a unimodular locally compact group of type I. 
Following \cite[\S 7.2]{Fo2}, a group is called type I if each primary representation\footnote{A unitary representation $(\pi,H)$ of $G$ is called primary if $\pi(G)''$, the von Neumann algebra it generates, is a factor, i.e., $Z(\pi(G)'')\cong \mathbb{C}$. 
Equivalently, assuming $(\pi,H)$ is a direct sum of irreducible representations, $(\pi,H)$ is primary if and only if $(\pi,H)$ is a direct sum of a single irreducible representation}  
 generates a type I factor. 
More precisely, given any unitary representation $(\pi,H)$ of $G$, if $\pi(G)''$ is factor, then $\pi(G)''$ is a factor of type I, i.e. $\pi(G)''\cong B(K)$ for some Hilbert space $K$ (possibly infinite-dimensional).
The class of type I groups contains real linear algebraic groups (see \cite[\S 8.4]{Kiri76}), 
reductive $p$-adic groups (see \cite{Bnst74}), and
also reductive adelic groups (see \cite[Appendix]{Clz07}).

For a 2-cocycle $\sigma\colon G\times G\to \mathbb{T}$ of $G$, we let
$\Pi(G,\sigma)$ be the set of equivalence classes of $\sigma$-projective irreducible representations of $G$:
\begin{center}
    $\pi(g)\pi(h)=\sigma(g,h)\pi(gh)$ for each $\pi \in \Pi(G,\sigma)$. 
\end{center} 
We also let $\lambda_{\sigma}$ and $\rho_{\sigma}$ be the $\sigma$-projective left and right regular representation of $G$ on $L^2(G)$ as follows: 
for $g\in G$, $f\in L^2(G)$, 
\begin{center}
    $\lambda_{\sigma}(g)f(x)=\sigma(x^{-1},g)f(g^{-1}x)$ and $\rho_{\sigma}(g)f(x)=\sigma(g^{-1},x^{-1})f(xg)$. 
\end{center}
We let $\overline{\sigma}$ denote the complex conjugate of $\sigma$. 

The following result is an altered statement of the theorem proved by Kleppner and Lipsman (see \cite[I.Theorem 7.1]{KlepLips1972}). 
Note that each group we consider here is unimodular and of type I. 

\begin{theorem}\label{tprojPthm}
There exists a positive standard Borel measure $\nu_{G,\sigma}$ on $\Pi(G,\sigma)$, a measurable field $\zeta\mapsto H_{\zeta}\otimes H_{\zeta}^{*}$ and a measurable field of representations $\zeta\mapsto \pi_{\zeta}$ such that
\begin{enumerate}
    \item $\pi_{\zeta}\in \zeta$ for $\nu_{G,\sigma}$-almost all $\zeta$;
    \item an isomorphism $\Psi\colon L^2(G)\to \int_{\Pi(G,\sigma)}^{\oplus}H_{\zeta}\otimes H_{\zeta}^{*}d \nu_{G,\sigma}(\zeta)$ which intertwines
    \begin{enumerate}
        \item the $\sigma$-projective representations $\lambda_{\sigma}$ with $\int_{\Pi(G,\sigma)}^{\oplus}\pi_{\zeta}\otimes \id_{\zeta}d \nu_{G,\sigma}(\zeta)$;
        \item the $\overline{\sigma}$-projective representations $\rho_{\sigma}$ with $\int_{\Pi(G,\sigma)}^{\oplus}\id_{\zeta}\otimes\pi_{\zeta}^{*} d \nu_{G,\sigma}(\zeta)$;
    \end{enumerate}
    \item $(\Psi f)(\zeta)=\pi_{\zeta}(f)$ for $f\in L^1(G)$;
    \item $\int_{G}f(x)\overline{h}(x)d \mu_{G}(x)=\int_{\Pi(G,\sigma)}\Tr(\pi_{\zeta}(f)\pi_{\zeta}(h)^{*})d \nu_{G,\sigma}(\zeta)$ for $f,h\in L^1(G)$. 
\end{enumerate}
\end{theorem}

Let $Z(G)$ be the center of $G$ and $\overline{G}=G/Z(G)$. 
Let $\chi$ be a character of $Z(G)$, i.e., $\chi\in \Hom(Z(G),\mathbb{T})$.   
There exists a 2-cocycle $\sigma_{\chi}$ of $\overline{G}$ with the following properties: if we denote again by $\sigma_{\chi}$ the lift to $G$, then $\chi$ extends to an $\sigma_{\chi}$-projective representation $\chi'$ of $G$. 
It is known that $\sigma_{\chi}$ is unique up to $2$-coboundaries and $\sigma_{\chi}$ can be always be normalized in the sense that $\sigma_{\chi}(g,g^{-1})=1$ (see \cite[I.\S 1]{KlepLips1972}). 
We fix the $2$-cocycle $\sigma_{\chi}$ and let $\tau$ be a $\sigma_{\chi}$-projective representation of $\overline{G}$. 
Denote by $\tilde{\tau}$ its lift to $G$. 
We define
\begin{center}
    $\pi(\tau,\chi)=\sigma_{\chi}\otimes \tilde{\tau}$
\end{center}
which is an ordinary representation of $G$.  

The following result gives the Plancherel measure of central extension, which is a special case of \cite[Theorem 3]{KlpLips1973} (see also \cite[I Theorem 8.1, Theorem 10.2]{KlepLips1972}). 
Here $\mu_G$ denotes a Haar measure on $G$. 
\begin{theorem}\label{tcenextPfml}
The left regular representation $\lambda_G$ of $G$ on $L^2(G,\mu_G)$ can be decomposed as the following direct integral:
\begin{center}
$L^2(G,\mu_G)=\int_{\Pi(Z(G))}^{\oplus}\int_{\Pi(\overline{G},\overline{\sigma_{\chi}})}^{\oplus}\pi(\tau,\chi) \otimes \id_{\pi(\tau,\chi)}d\nu_{\overline{G},\overline{\sigma_{\chi}}}(\tau)d \nu_{Z(G)}(\overline{\chi})$.
\end{center}
The right regular representation $\rho_G$ can be decomposed in a similar way. 
Moreover, for $f\in L^1(G)\cap L^2(G)$, we have
\begin{center}
    $\int_{G}|f(g)|^2d \mu_G(g)=\int_{\Pi(Z(G))}\int_{\Pi(\overline{G},\overline{\sigma_{\chi}})}\|\pi(\tau,\chi)(f)\|_2^{2}d\nu_{\overline{G},\overline{\sigma_{\chi}}}(\tau)d \nu_{Z(G)}(\overline{\chi})$. 
\end{center}
\end{theorem}

\subsection{The density over discrete groups}\label{ssvndim}

Let $\Gamma$ be a countable group with the counting measure.  
Let $\{\delta_{\gamma}\}_{\gamma\in \Gamma}$ be the usual orthonormal basis of $l^2(\Gamma)$. 
We also let $\lambda$ and $\rho$ be the left and right regular representations of $\Gamma$ on $l^2(\Gamma)$, respectively.
For all $\gamma,\gamma'\in \Gamma$, we have
$\lambda(\gamma')\delta_{\gamma}=\delta_{\gamma'\gamma}$ and $\rho(\gamma')\delta_{\gamma}=\delta_{\gamma\gamma'^{-1}}$. 
Let $\mathcal{L}(\Gamma)$ be the strong operator closure of the complex linear span of $\lambda(\gamma)$'s (or equivalently,  $\rho(\gamma)$'s). 
This is the {\it left group von Neumann algebra of $\Gamma$}. 
The {\it right group von Neumann algebra of $\Gamma$}, denoted by $\mathcal{R}(\Gamma)$, is defined in a similar way by the right regular representation.

We fix a $2$-cocycle $\sigma\colon \Gamma\times \Gamma \to \mathbb{T}$ on $\Gamma$, which is normalized in the sense that $\sigma(\alpha,\beta)\sigma(\alpha\beta,\gamma)=\sigma(\beta,\gamma)\sigma(\alpha,\beta\gamma)$ and $\sigma(\alpha,e)=\sigma(e,\alpha)=1$ for $\alpha,\beta,\gamma\in \Gamma$. 
Following \cite{VaDo2024,Enstad22}, we define
\begin{enumerate}
\item the {\it $\sigma$-twisted left group von Neumann algebra} $\mathcal{L}(\Gamma,\sigma)$ is the weak operator closed algebra generated by $\{\lambda_{\sigma}(\gamma)|\gamma\in \Gamma\}$.
    \item the {\it $\sigma$-twisted right group von Neumann algebra} $\mathcal{R}(\Gamma,\sigma)$ is the weak operator closed algebra generated by $\{\rho_{\sigma}(\gamma)|\gamma\in \Gamma\}$. 
\end{enumerate}
It is known that $\mathcal{R}(\Gamma,\overline{\sigma})$ is the commutant of $\mathcal{L}(\Gamma,\sigma)$ on $l^2(\Gamma)$  (see \cite[\S 1]{Kleppner62}). 
When $\sigma$ is trivial, $\mathcal{L}(\Gamma,\sigma)$ reduces to $\mathcal{L}(\Gamma)$. 
Thus, if $H^2(\Gamma;\mathbb{T})$ is trivial, all these $\mathcal{L}(\Gamma,\sigma)$ are isomorphic to the untwisted group von Neumann algebra $\mathcal{L}(\Gamma)$.

Observe that there is a natural trace $\tau\colon \mathcal{L}(\Gamma,\sigma)\to \mathbb{C}$  given by
\begin{center}
    $\tau(x)=\langle x\delta_e,\delta_e\rangle_{l^2(\Gamma)}$.
\end{center}
It gives an inner product on $\mathcal{L}(\Gamma,\sigma)$ defined by $\langle x,y \rangle_{\tau}=\tau(xy^*)$ for $x,y\in \mathcal{L}(\Gamma,\sigma)$. 
The completion of $\mathcal{L}(\Gamma,\sigma)$ with respect to the inner product $\langle -,- \rangle_{\tau}$ is exactly $l^2(\Gamma)$. 
More generally, for a tracial von Neumann algebra $M$ with the trace $\tau$, we consider the GNS representation of $M$ on the Hilbert space constructed from the completion of $M$ with respect to the inner product $\langle x,y\rangle_{\tau}=\tau(xy^*)$. 
The underlying space will be denoted by $L^2(M,\tau)$, or simply $L^2(M)$.

Suppose $\pi\colon M\to B(H)$ is a normal \footnote{We call such a representation $\pi\colon M\to B(H)$ {\it normal} if $\pi$ is continuous with respect to the weak topology on $M$.} unital representation of $M$ with both $M$ and $H$ separable. 
There exists an isometry $u\colon H\to L^2(M)\otimes l^2(\mathbb{N})$ such that for all $x\in M$, 
$u\circ\pi(x)=(\lambda(x)\otimes\id_{l^2(\mathbb{N})} )\circ u$, $\forall x\in M$, i.e., the following diagram commutes 
\begin{center}
\begin{tikzcd}
	H && H \\
	{L^2(M)\otimes l^2(\mathbb{N})} && {L^2(M)\otimes l^2(\mathbb{N})}
	\arrow["{\pi(x)}", from=1-1, to=1-3]
	\arrow["u", from=1-1, to=2-1]
	\arrow["u", from=1-3, to=2-3]
	\arrow["{\lambda(x)\otimes {\rm id}}", from=2-1, to=2-3]
\end{tikzcd},
\end{center}
where $\lambda\colon M\to B(L^2(M))$ denotes the left multiplication.  
Then we obtain a projection $p=uu^*$ such that
\begin{center}
    $p\colon L^2(M)\otimes l^2(\mathbb{N})\to p(L^2(M)\otimes l^2(\mathbb{N}))\cong H$, 
\end{center}
which intertwines with the $M$-actions. 
Let $M'$ denotes the commutant of $M$, i.e., $M'=\{T\in B(H)\mid Tx=xT,~\forall x \in M\}$ for an $M$-module $H$. 
We have the following result (see \cite[Proposition 8.2.3]{APintrII1}). 

\begin{proposition}\label{ptrdim}
The correspondence $H\mapsto p$ above defines a bijection between the set of equivalence classes of left $M$-modules and the set of equivalence classes of projections in $(M'\cap B(L^2(M)))\otimes B(l^2(\mathbb{N}))$. 
\end{proposition}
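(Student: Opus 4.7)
The plan is to prove Proposition \ref{ptrdim} by producing an inverse map and checking it is well-defined. The assertion that $p \in (M' \cap B(L^2(M))) \otimes B(l^2(\mathbb{N}))$ is consistent because, under the identification $B(L^2(M) \otimes l^2(\mathbb{N})) \cong B(L^2(M)) \otimes B(l^2(\mathbb{N}))$, the commutant of $\lambda(M) \otimes \id$ is exactly $(M' \cap B(L^2(M))) \otimes B(l^2(\mathbb{N}))$, and the intertwining condition $u \circ \pi(x) = (\lambda(x) \otimes \id) \circ u$ forces the range projection $p = uu^*$ to commute with $\lambda(M) \otimes \id$.

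I would first verify that the isometry $u$ claimed before the proposition actually exists for every separable normal unital representation $\pi \colon M \to B(H)$. The strategy is to decompose $H$ into a countable orthogonal sum of cyclic $M$-submodules $H_n = \overline{\pi(M)\xi_n}$, which is possible by separability of $H$. Each cyclic normal representation is, via the GNS construction, determined by its associated normal positive functional $\varphi_n(x) = \langle \pi(x)\xi_n, \xi_n\rangle$ on $M$. The key analytic input is that every such $\varphi_n$ can be realized by a vector in (an amplification of) $L^2(M, \tau)$: indeed, by standard theory of tracial and semifinite von Neumann algebras (the uniqueness of the standard form, or equivalently the Dixmier–Gelfand–Naimark–Segal construction on tracial algebras), every cyclic normal representation of $M$ is unitarily equivalent to a subrepresentation of $L^2(M)$ with multiplicity at most one after amplification by $l^2(\mathbb{N})$. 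Summing these embeddings over $n$, one obtains the required isometry $u \colon H \to L^2(M) \otimes l^2(\mathbb{N})$ intertwining $\pi$ with $\lambda \otimes \id$.

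Next I would show that the projection $p = uu^*$ is independent of the choice of $u$ up to Murray–von Neumann equivalence inside the commutant $(M' \cap B(L^2(M))) \otimes B(l^2(\mathbb{N}))$. If $u_1, u_2$ are two such intertwining isometries with $p_i = u_i u_i^*$, then the operator $v = u_2 u_1^*$ lies in the commutant (it intertwines $\lambda \otimes \id$ with itself), and it is a partial isometry with $v^*v = p_1$ and $vv^* = p_2$. Conversely, equivalent projections give unitarily equivalent $M$-modules: if $p_1 \sim p_2$ via a partial isometry $v$ in the commutant, then $v$ restricts to an $M$-linear unitary between $p_1(L^2(M) \otimes l^2(\mathbb{N}))$ and $p_2(L^2(M) \otimes l^2(\mathbb{N}))$. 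This shows the map $H \mapsto [p]$ is well defined and injective on equivalence classes.

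For surjectivity, any projection $p \in (M' \cap B(L^2(M))) \otimes B(l^2(\mathbb{N}))$ yields a left $M$-module $H_p := p(L^2(M) \otimes l^2(\mathbb{N}))$ with the action inherited from $\lambda \otimes \id$ and with the inclusion serving as its own intertwining isometry, so the projection associated to $H_p$ is $p$ itself. Combining the three steps yields the bijection. I expect the main technical obstacle to lie in the first step, namely justifying that every cyclic normal representation embeds isometrically into $L^2(M) \otimes l^2(\mathbb{N})$ in an $M$-equivariant way; this is the content of the uniqueness of the standard form of a tracial von Neumann algebra, and once invoked the rest is a matching-up of partial isometries in the commutant.
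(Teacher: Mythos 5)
Your proposal is correct and follows the standard argument for this result; the paper itself does not supply a proof but simply cites Anantharaman--Popa (Proposition 8.2.3 of their II$_1$-factors notes), and your proof reconstructs exactly that argument: decompose $H$ into cyclic pieces, use the Radon--Nikodym theorem relative to $\tau$ to embed each cyclic normal representation $M$-equivariantly into $L^2(M)$, sum up to obtain the intertwining isometry $u$, and then match projections via partial isometries $u_2 u_1^*$ in the commutant. One small imprecision in your first step: a single cyclic normal representation already embeds into $L^2(M)$ itself (via $x\Omega_{\varphi}\mapsto x h^{1/2}$ where $\varphi=\tau(\,\cdot\, h)$, $h\in L^1(M)_+$), and the amplification by $l^2(\mathbb{N})$ is only needed when summing the countably many cyclic pieces; phrasing it as needing amplification \emph{per cyclic piece} is slightly misleading but does not affect the validity of the argument.
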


The {\it von Neumann dimension} of the $M$-module $H$ are defined to be $(\tau\otimes \Tr)(p)$ and denoted by $\dim_M(H)$, which takes its value in $[0,\infty]$. 
We have: 
\begin{enumerate}
    \item $\dim_M(\oplus_i H_i)=\sum_i \dim_M(H_i)$. 
    \item $\dim_M(L^2(M))=1$.
\end{enumerate} 
Note that $\dim_M(H)$ depends on the trace $\tau$. 
Moreover, if $M$ is a finite factor, i.e., $Z(M)\cong\mathbb{C}$, there is a unique normal tracial state and we further have: 
\begin{enumerate}
\setcounter{enumi}{2}
    \item $\dim_M(H)=\dim_M(H')$ if and only if $H$ and $H'$ are isomorphic as $M$-modules (provided $M$ is a factor).  
\end{enumerate}
When $M$ is not a factor, there is a $Z(M)$-valued trace which determines the isomorphism class of an $M$-module (see \cite{Bek04}).  

Now we consider the case that $\Gamma$ is a discrete subgroup of a locally compact unimodular type I group $G$.  
Let $\mu$ be a Haar measure of $G$. A measurable set $D\subset G$ is called a {\it fundamental domain} for $\Gamma$ if $D$ satisfies $\mu(G\backslash \cup_{\gamma\in\Gamma}\gamma D)=0$ and 
$\mu(\gamma_1 D\cap \gamma_2 D)=0$ if $\gamma_1\neq \gamma_2$ in $\Gamma$. 
We always assume $\Gamma$ is a lattice, i.e., $\mu(D)<\infty$.
The measure $\mu(D)$ is called {\it covolume} of $\Gamma$ and will be denoted by $\covol(\Gamma)$. 
Note that the covolume depends on the Haar measure $\mu$.




\begin{theorem}\label{tAS}
Let $G$ be a unimodular group and $\Gamma$ be a lattice in $G$. 
Suppose $\pi$ is a square-integrable representation of $G$. 
We have
\begin{center}
    $\dim_{\mathcal{L}(\Gamma)}\pi=\mu(\Gamma\backslash G)\cdot d(\pi)$.
\end{center}
\end{theorem}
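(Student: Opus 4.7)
The plan is to explicitly embed $\pi$ isometrically into $L^2(G)$ as a $\Gamma$-module and then compute the von Neumann dimension of the image via a fundamental domain. By square-integrability, the orthogonality relations that define the formal degree $d(\pi)$ give, for any unit vector $v \in \pi$, a $G$-equivariant linear map
\begin{equation*}
T_v \colon \pi \to L^2(G), \qquad T_v(w)(g) = \langle w, \pi(g) v \rangle,
\end{equation*}
satisfying $T_v^* T_v = d(\pi)^{-1}\cdot \id_\pi$. Hence $\sqrt{d(\pi)}\, T_v$ is an isometry intertwining $\pi$ with a $G$-subrepresentation of the regular representation on $L^2(G)$, and $T_v(\pi)$ is a closed $\Gamma$-invariant subspace of $L^2(G)$. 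Because equivalent $\mathcal{L}(\Gamma)$-modules have equal von Neumann dimension, $\dim_{\mathcal{L}(\Gamma)}(\pi) = \dim_{\mathcal{L}(\Gamma)}(T_v(\pi))$.

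Next I would choose a measurable fundamental domain $D$ for $\Gamma$ in $G$ to produce a $\Gamma$-equivariant unitary isomorphism
\begin{equation*}
L^2(G) \;\cong\; L^2(D) \otimes l^2(\Gamma),
\end{equation*}
under which $\Gamma$ acts trivially on $L^2(D)$ and by the regular representation on $l^2(\Gamma)$. Under this identification $\mathcal{L}(\Gamma)$ sits inside $B(L^2(G))$ as $1 \otimes \mathcal{L}(\Gamma)$, so its commutant contains $B(L^2(D)) \otimes \mathcal{R}(\Gamma)$. By Proposition \ref{ptrdim}, for any closed $\Gamma$-invariant subspace $W \subset L^2(G)$ the orthogonal projection $P_W$ lies in this commutant and
\begin{equation*}
\dim_{\mathcal{L}(\Gamma)}(W) = (\Tr_{L^2(D)} \otimes \tau)(P_W).
\end{equation*}

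Applying this to $W = T_v(\pi)$ produces the volume factor. Since $T_v^* T_v = d(\pi)^{-1}\id_\pi$, the projection onto $T_v(\pi)$ equals $P = d(\pi)\, T_v T_v^*$, and $T_v T_v^*$ acts on $L^2(G)$ as convolution against the diagonal matrix coefficient $g \mapsto \langle \pi(g) v, v\rangle$. Its integral kernel takes the constant value $d(\pi)\|v\|^2 = d(\pi)$ on the diagonal of $D \times D$, so applying $\Tr_{L^2(D)} \otimes \tau$ and using Fubini gives
\begin{equation*}
\dim_{\mathcal{L}(\Gamma)}(\pi) = d(\pi) \cdot \mu(D) = d(\pi) \cdot \mu(\Gamma \backslash G),
\end{equation*}
which is the claimed formula.

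The main subtlety is that $T_v T_v^*$ is \emph{not} trace class on $L^2(G)$, yet it is trace class with respect to the semifinite trace induced by the decomposition $L^2(G) \cong L^2(D) \otimes l^2(\Gamma)$. The core of the Atiyah--Schmid argument is to justify this carefully: only the contribution of the identity element of $\Gamma$ survives in the computation of $\tau$, the remaining ``off-diagonal'' contributions cancel by orthogonality of translates of the matrix coefficient, and the resulting integration of the constant diagonal kernel against $\mu|_D$ produces exactly $d(\pi)\mu(D)$. The rest is Fubini, together with independence of the answer from the choice of $v$ (which follows from the orthogonality relations themselves).
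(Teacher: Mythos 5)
The paper does not give its own proof of Theorem~\ref{tAS}: it cites \cite[\S 3]{AS77} and \cite[\S 3.5]{GHJ} and then proves the generalization Theorem~\ref{tdimmeas} in Section~\ref{ssprojtemvndim}. Your argument is essentially the GHJ argument that the paper cites: embed $\pi$ into $L^2(G)$ via a matrix-coefficient map $T_v$ with $T_v^*T_v = d(\pi)^{-1}\id$, decompose $L^2(G)\cong L^2(D)\otimes l^2(\Gamma)$ over a fundamental domain so that $\mathcal{L}(\Gamma)$ acts as $1\otimes\mathcal{L}(\Gamma)$, and evaluate $\Tr_{L^2(D)}\otimes\tau$ on $P = d(\pi)T_vT_v^*$ by noting the continuous kernel of $P$ equals $d(\pi)$ on the diagonal. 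This is correct. One small inaccuracy worth flagging: the claim that the ``off-diagonal contributions cancel by orthogonality of translates of the matrix coefficient'' is not the right reason — the vanishing of the $\gamma\neq e$ blocks under $\tau$ is simply by definition of the canonical trace on the commutant (only the coefficient of $\rho(e)$ in the expansion $P=\sum_\gamma P_{e,\gamma}\otimes\rho(\gamma)$ survives), not a cancellation of matrix coefficients. What genuinely requires care is that the $e$-block $P_{e,e}$ is a trace-class operator on $L^2(D)$ with trace equal to the diagonal integral; this uses positivity and continuity of the kernel.

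The paper's proof of its generalization takes a related but distinct route. It first proves Lemma~\ref{ldimsumnorm}, expressing $\dim_{\mathcal{L}(\Gamma,\sigma)}(H_X)=\sum_{k\geq1}\|Pd_k\|^2$ over an orthonormal basis $\{d_k\}$ of $L^2(D)$, and then recovers $\covol(\Gamma)$ by integrating the identity $1=\|\rho_{\sigma,X}(g)\eta\|^2$ over $D$ for a unit vector $\eta$ with $\|\eta(\pi)\|^2=1/\nu(X)$, using the Plancherel theorem to turn the result into $\nu(X)^{-1}\sum_k\|Pd_k\|^2$. That formulation has no single matrix coefficient $T_v$ and no integral kernel, which is why it survives the passage to direct integrals over measurable families of representations and to $2$-cocycle–twisted von Neumann algebras. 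For the untwisted square-integrable case you treat, both arguments compute $\Tr_{\mathcal{L}(\Gamma)'}$ of the same projection and yield the same formula.
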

This result was first proved in \cite[\S 3]{AS77} for $G$ being semi-simple simply-connected Lie groups without compact factors and $\Gamma$ being comcompact. 
The proof actually applies to this more general case (see \cite[Theorem 3.3.2]{GHJ}). 
We briefly review the generalization of Theorem \ref{tAS} for the twisted group von Neumann algebras and tempered representations in \cite{Y25AS}.

Recall that $(M,\tau)$ is a tracial von Neumann algebra. 
The following notion generalizes the definition of $\Gamma$-dimensions (see \cite[\S 1]{LuckL2book}). 
\begin{definition}[von Neumann densities]\label{dvnden}
Let $(X,\nu)$ be a measure space and $\{H_x\}_{x\in X}$ be a field of Hilbert spaces over $X$ such that there exists a unique constant $C>0$ and for any measurable $Y\subset X$, 
\begin{equation} 
H_Y=\int_{Y}^{\oplus }H_{x}d\nu(x) \text{~is an~} M\text{-module}  \text{~such that~}
    \dim_{M}H_Y=C\cdot \nu(Y). 
\end{equation}
We call $C\cdot d\nu(x)$ the \textbf{von Neumann density} of $H_x$ over $M$ and denote it by $d_{M}(H_x)$. 

If $M=\mathcal{L}(\Gamma)$ or $\mathcal{L}(\Gamma,\omega)$ for a countable discrete group $\Gamma$ and a $2$-cocycle $\omega$, we simply denote $d_{M}(H_x)$ by $ d_{\Gamma}(H_x)$ or $d_{\Gamma,\omega}(H_x)$ and call it \textbf{$\Gamma$-density} or \textbf{$(\Gamma,\omega)$-density} of $H_x$.    
\end{definition}

Although we do not assume locally that $H_{x}$ is a $M$-module for each $x$, one can show 
$d_{M}(H_x)=\dim_M H_x$ if $H_x$ is a $M$-module and $\nu(\{x\})=1$. 
In this case, we can further show
\begin{center}  $\dim_{M}\int_{Y}^{\oplus}H_{x}d\nu(x)=\int_{Y}d_{M}H_{x}d\nu(x)$, 
\end{center}
which is proved in \cite[Proposition 2.1]{Y25AS}. 

Let $\nu_{G,\sigma}$ be the Plancherel measure on $\Pi(G,\sigma)$, the set of equivalence classes of $\sigma$-projective irreducible representations of $G$ (see Theorem \ref{tprojPthm}) for a 2-cocycle $\sigma$ of $G$ . 
For $X\subset\Pi(G,\sigma)$ such that $\nu_{G,\sigma}(X)<\infty$, we let
\begin{center}
    $H_X=\int_X^{\oplus} H_{\pi}d\nu_{G,\sigma}(\pi)$,
\end{center}
which is a module over $\mathcal{L}(G,\sigma)$ and $\mathcal{L}(\Gamma,\sigma)$ (the $\sigma$-twisted left group von Neumann algebra of $G,\Gamma$) respectively). 
The following result is the generalization of Theorem \ref{tAS} (see \cite[Theorem 3.5]{Y25AS}). 

\begin{theorem}\label{tdimmeas}
Let $\Gamma$ be a lattice of $G$.   
We have 
\begin{equation}\label{eASf1}
   \dim_{\mathcal{L}(\Gamma,\sigma)}H_X=\mu(\Gamma\backslash G)\cdot \nu_{G,\sigma}(X),
\end{equation}
or, equivalently, 
\begin{equation}\label{eASf1a}
  d_{\Gamma,\sigma}(H_{\pi})=\mu(\Gamma\backslash G)\cdot d\nu_{G,\sigma}(\pi)
\end{equation}
for a $\sigma$-projective tempered irreducible representation $(\pi,H_{\pi})$ of $G$. 
\end{theorem}

Here we provide examples of tempered representations and projective tempered representations. 

\begin{example}
\begin{enumerate}
    \item Let $G=\SL(2,\mathbb{R})$. 
We take $d\mu(g)=d\lambda(t) y^{-2}dx dy$ as the Haar measure on $G$. 
For $\Gamma=\SL(2,\mathbb{Z})$, one has $\mu(\Gamma\backslash G)=\frac{\pi}{3}$. 
For $k\geq 2$, we let $(\pi_k,H_{k})$ be the holomorphic discrete series representation of $G$ whose minimal $\SO(2)$-weight is $k$. 
We have the formal degree $d(\pi_k)=\frac{k-1}{4\pi}$. 
We take $\Gamma=\SL(2,\mathbb{Z})$. 
We have
\begin{center}
    $d_{\Gamma}(H_k)=\dim_{\mathcal{L}(\Gamma)}H_k=\frac{k-1}{12}$
\end{center}
by Theorem \ref{tAS} (see also \cite[Example 3.3.4]{GHJ} and \cite[\S 6]{Ra14}). 

Let $\{(\pi^{\pm}_{it},H_{it}^{\pm})|t>0\}$ be the principal series representations of $G$, which are given by the parabolic induction of characters $\chi_t^{\pm}\begin{pmatrix}
a & b\\
0 & a^{-1}
\end{pmatrix}=\varepsilon^{\pm}(a)|a|^{it}$, where $\varepsilon^{+}(a)=1$
and $\varepsilon^{-}(a)={\rm sign}(a)$.  
They are tempered representations that are not discrete series. 
Then, by the explicit Plancherel measure of $\SL(2,\mathbb{R})$ in \cite[\S 7.6]{Fo2}, we have
\begin{center}
    $d_{\Gamma}(H_{it}^{+})=\frac{t}{24}\tanh\frac{\pi t}{2}dt$ and $d_{\Gamma}(H_{it}^{-})=\frac{t}{24}\coth\frac{\pi t}{2}dt$,
\end{center}
where $dt$ denotes the Lebesgue measure of the real parameter $t>0$.

\item For $G=\PSL(2,\mathbb{R})$ and $\{(\pi_r,H_r)\}_{r>1}$ be the family of projective irreducible representation of $G$ (see \cite[Definition 1.1]{Ra98}). 
Let $\Gamma=\PSL(2,\mathbb{Z})$. 
We know that $H^2(\PSL(2,\mathbb{Z});\mathbb{T})=1$ and $\mathcal{L}(\Gamma,\sigma)\cong \mathcal{L}(\Gamma)$ for any $2$-cocycle $\sigma$.
By \cite[Theorem 3.2]{Ra98}, we have
\begin{center}
    $d_{\Gamma,\sigma}(H_r)=d_{\Gamma}(H_r)=\dim_{\mathcal{L}(\Gamma)}H_r=\frac{r-1}{12}$. 
\end{center}
\end{enumerate}
\end{example}

\section{The Jacquet-Langlands correspondence of Plancherel measures}\label{sJLmeas}

In this section, we give explicit Haar measures on the groups $\GL(n)$ and its inner forms over local fields. 
We show that the local Jacquet-Langlands correspondence preserves the associated Plancherel measures. 
Then we prove the compatibility of the local Plancherel measures and the global Tamagawa measure.

\subsection{The Haar measures on inner forms of $\GL(n)$: the non-archimedean case}\label{ssHaarp}

We fix a non-archimedean place $v\in V_f$ throughout this section. 
Consider the $F_v$-algebra $A_v\cong \Mt(n_v,D_v)$ and its multiplicative group $A_v^{\times}\cong \GL(n_v,D_v)$. 

There is a valuation map on $D_v$ given as $w_v=v \circ \Nm_{D_v/F_v}\colon D_v\to \mathbb{Z}$. 
Let $O(D_v)$ be the maximal $\mathcal{O}_v$-order of $D_v$ and $J(D_v)$ be the maximal ideal of $O(D_v)$. 
It is known that (see \cite[Theorem 12.8 and 13.2]{Rei03})
\begin{center}
    $O(D_v)=\{x\in D_v|w_v(x)\geq 0\}$,  $J(D_v)=\{x\in D_v|w_v(x)> 0\}$, 
\end{center}
which are unique up to isomorphism. 

For the $F_v$-algebra $A_v\cong \Mt(n_v,D_v)$, we recall that $O(A_v)$ is its maximal $\mathcal{O}_v$-order and let $J(A_v)$ be the maximal two-sided ideal of $O(A_v)$ (see Section \ref{ssdivnumfield}).  
It is known that (see \cite[Theorem 17.3]{Rei03})
\begin{center}
    $O(A_v)=\Mt(n_v,O(D_v))$,  $J(A_v)=J(D_v)O(A_v)$. 
\end{center}
The additive Haar measure $\mu^{+}_v$ on $A_v$ is defined to be the one normalized in the sense that its maximal order has volume $1$, i.e., $\mu^{+}_v(O(A_v))=1$.  
The multiplicative Haar measure $\mu^{\times}_v$ is defined by
\begin{center}
    $d \mu^{\times}_v(x)=(1-1/q_v)^{-1} \frac{d \mu^{+}_v(x)}{\|x\|_{A_v}}$,
\end{center}
where $\|x\|_{A_v}$ is the norm on $A_v$ defined as $\|x\|_{A_v}=\|\Nm_{A_v/F_v}(x)\|_{F_v}$. 

\begin{lemma}\label{lvolmaxorder}
$\mu^{\times}_v(O^{\times}(A_v))=(1-1/q_v)^{-1}\cdot \prod_{i=1}^{n_v}(1-(q_v^{-d_v})^i)$. 
\end{lemma}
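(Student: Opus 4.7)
The plan is to reduce modulo the Jacobson radical $J(A_v)$ and invoke the classical order formula for $\GL$ over a finite field. First, for any $x \in O^{\times}(A_v)$ one has $\Nm_{A_v/F_v}(x) \in \mathcal{O}_v^{\times}$, so $\|x\|_{A_v} = 1$. Consequently the multiplicative Haar measure restricts to $(1 - 1/q_v)^{-1}$ times the additive Haar measure on $O^{\times}(A_v)$, and it suffices to compute $\mu^{+}_v(O^{\times}(A_v))$.

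Next, I would exploit the reduction map $O(A_v) \to O(A_v)/J(A_v)$. Since $O(A_v) = \Mt(n_v, O(D_v))$ and $J(A_v) = J(D_v)O(A_v)$, the quotient identifies with $\Mt(n_v, O(D_v)/J(D_v))$, and the residue field $O(D_v)/J(D_v)$ of $D_v$ has $q_v^{d_v}$ elements by the standard structure theory of local division algebras. Because $J(A_v)$ is the Jacobson radical of $O(A_v)$, an element $x \in O(A_v)$ is a unit iff its image under reduction is invertible: one direction is clear, and the converse uses $1 + J(A_v) \subset O^{\times}(A_v)$ via a geometric-series argument. Hence $O^{\times}(A_v)$ is precisely the preimage of $\GL(n_v, \mathbb{F}_{q_v^{d_v}})$ under the reduction map.

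Each coset of $J(A_v)$ in $O(A_v)$ has additive volume $q_v^{-d_v n_v^2}$, since $[O(A_v) : J(A_v)] = q_v^{d_v n_v^2}$ and $\mu^{+}_v(O(A_v)) = 1$. Therefore
\[
\mu^{+}_v(O^{\times}(A_v)) = \frac{|\GL(n_v, \mathbb{F}_{q_v^{d_v}})|}{q_v^{d_v n_v^2}} = \prod_{i=1}^{n_v}\bigl(1 - (q_v^{-d_v})^i\bigr),
\]
where I use the classical identity $|\GL(n, \mathbb{F}_q)| = q^{n^2}\prod_{i=1}^{n}(1 - q^{-i})$. Multiplying by $(1 - 1/q_v)^{-1}$ yields the claim. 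The computation is routine; the only items requiring care are the identification $|O(D_v)/J(D_v)| = q_v^{d_v}$ and the residue-field characterization of units, both of which are standard consequences of the results of \cite{Rei03} already cited above.
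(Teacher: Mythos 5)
Your proof is correct and takes essentially the same approach as the paper: both reduce modulo the Jacobson radical $J(A_v)$, use $\|x\|_{A_v}=1$ to convert between multiplicative and additive Haar measure, and invoke the counting formula for $\GL(n_v,\mathbb{F}_{q_v^{d_v}})$. The paper packages the step via an explicit short exact sequence $1\to 1+J(D_v)\Mt(n_v,O(D_v))\to O^{\times}(A_v)\to \GL(n_v,O(D_v)/J(D_v))\to 1$, whereas you phrase it as counting cosets in the preimage of $\GL(n_v,\mathbb{F}_{q_v^{d_v}})$, but the computation is identical.
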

\begin{proof}
Note that $O(D_v)/J(D_v)$ is a skewfield over $k(v)$ and hence a finite field (see \cite[Theorem 13.2 and Theorem 7.24]{Rei03}).
It is known that $\dim_{k(v)}O(D_v)/J(D_v)=d_v$ (see \cite[Theorem 14.3]{Rei03}). 
Consider the short exact sequence
\begin{center}
    $1\to 1+J(D_v)\Mt(n_v,O(D_v))\to \GL(n_v,O(D_v))\to \GL(n_v,O(D_v)/J(D_v))\to 1$. 
\end{center}
Since $O(A_v)/J(D_v)\Mt(n_v,O(D_v))=\Mt(n_v,\mathbb{F}_{q_{v}^{d_v}})$, 
we have
\begin{equation*}
\begin{aligned}
    \mu^+_{v}(1+J(D_v)\Mt(n_v,O(D_v)))&=\mu^+_{v}(J(D_v)\Mt(n_v,O(D_v)))\\
    &=\frac{1}{\# \Mt(n_v,\mathbb{F}_{q_{v}^{d_v}})}\cdot \mu^{+}_v(O(A_v))\\
    &=q_v^{-d_v n_v^2}. 
\end{aligned}
\end{equation*}
Since $\|x\|_{A_v}=1$ for $x\in \GL(n_v,O(D_v))$, we have
\begin{equation*}
\begin{aligned}
\mu^{\times}_v(O(A_v))&=(1-1/q_v)^{-1}\cdot \mu^{+}_v(1+J_v\cdot \Mt(n_v,O(D_v))\cdot |\GL(n_v,O(D_v)/J(D_v))| \\
&=(1-1/q_v)^{-1}\cdot q_v^{-d_v n_v^2}\cdot \prod_{i=0}^{n_v-1}((q_v^{d_v})^{n_v}-(q_v^{d_v})^i)\\
&=(1-1/q_v)^{-1}\cdot \prod_{i=1}^{n_v}(1-(q_v^{-d_v})^i). 
\end{aligned}
\end{equation*}
\end{proof}

In particular, for the split case when $r_v=d$, $d_v=1$ and $O(A_v)^{\times}\cong \GL(nd,\mathcal{O}_v)$, we have
\begin{center}
 $\mu^{\times}_v(\GL(nd,\mathcal{O}_v))=(1-1/q_v)^{-1}\prod_{i=1}^{nd}(1-q_v^{-i})$.    
\end{center}

Now we define the local Tamagawa measure on $A_v^{\times}\cong \GL(n_v,D_v)$ as 
\begin{center}
    $\mu_v=\N(\disc_{\mathbb{Z}_p}(O(A_v)))^{-1/2}\cdot \mu^{\times}_v$. 
\end{center}
It is self-dual to the standard unitary character $\psi_{A_v}\colon A_v\to  F_v$ constructed as follows (see \cite[Proposition 29.7.7]{Voight21}):
\begin{enumerate}
    \item For $x\in \mathbb{Q}_p$, we let $\langle x\rangle_{\mathbb{Q}_p}\in \mathbb{Q}$ such that $0\leq \langle x\rangle_{\mathbb{Q}_p}<1$ and $x-\langle x\rangle_{\mathbb{Q}_p}\in\mathbb{Z}_p$.
    \item For $x \in F_v$ with $F_v/\mathbb{Q}_p$ a finite extension, we let $\langle x\rangle_{F_v}=\langle \Tr_{F_v/\mathbb{Q}_p} (x)\rangle_{\mathbb{Q}_p}$ and $\psi_{F_v}(x)=\exp(2\pi i \langle x\rangle_{F_v})$
    \item For $x\in A_v$, we let $\psi_{A_v}(x)=\psi_{F_v}(\tr(x))$.  
\end{enumerate}

The following two results are needed for the normalization of the Haar measure and the Plancherel measures. 
Observe $\disc_{\mathbb{Z}_p}(\mathcal{O}_v)$ is the local discriminant $d(F_v)$ if $v$ is above $p$. 
\begin{lemma}\label{lpNdisc}
$\N(\disc_{\mathbb{Z}_p}(O(A_v)))=d(F_v)^{n^2d^2}q_v^{d_{v}(d_{v}-1)n_v^2}$
\end{lemma}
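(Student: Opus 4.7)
The plan is to reduce the computation of $\N(\disc_{\mathbb{Z}_p}(O(A_v)))$ to two simpler discriminant computations by tower formulas, and then to compute the discriminant of the maximal order $O(D_v)$ of the division algebra directly from its explicit structure.

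First, I would use the tower formula for discriminants in the chain $\mathbb{Z}_p \subset \mathcal{O}_v \subset O(A_v)$ (see \cite[\S 10]{Rei03}). Since $\dim_{F_v} A_v = (n_v d_v)^2 = n^2 d^2$, this gives
\begin{equation*}
\disc_{\mathbb{Z}_p}(O(A_v)) \;=\; \N_{\mathcal{O}_v/\mathbb{Z}_p}\bigl(\disc_{\mathcal{O}_v}(O(A_v))\bigr)\cdot \disc_{\mathbb{Z}_p}(\mathcal{O}_v)^{n^2 d^2}.
\end{equation*}
Taking absolute norms, and using $\N(\disc_{\mathbb{Z}_p}(\mathcal{O}_v)) = d(F_v)$ together with $\N\circ\N_{\mathcal{O}_v/\mathbb{Z}_p} = \N$ on ideals, this reduces the problem to showing $\N(\disc_{\mathcal{O}_v}(O(A_v))) = q_v^{d_v(d_v - 1)n_v^2}$.

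Next I would reduce from $O(A_v) = \Mt(n_v, O(D_v))$ to $O(D_v)$. Picking an $\mathcal{O}_v$-basis $\{e_1,\ldots,e_{d_v^2}\}$ of $O(D_v)$, the elements $\{E_{ij} e_k\}_{1\le i,j\le n_v,\,1\le k\le d_v^2}$ form an $\mathcal{O}_v$-basis of $O(A_v)$. The identity $\tr_{A_v/F_v}((m_{ij})) = \sum_i \tr_{D_v/F_v}(m_{ii})$, obtained by passing to a splitting field $K_1$ where $A_v \otimes K_1 \cong \Mt(nd, K_1)$, yields
\begin{equation*}
\tr\bigl((E_{ij}e_k)(E_{i'j'}e_l)\bigr) \;=\; \delta_{j,i'}\,\delta_{i,j'}\,\tr_{D_v/F_v}(e_k e_l).
\end{equation*}
After a suitable reordering of the basis, the Gram matrix is the Kronecker product $P\otimes T$, where $T = [\tr(e_k e_l)]_{k,l}$ is the Gram matrix of $O(D_v)$ and $P$ is the permutation matrix on $\{1,\ldots,n_v\}^2$ given by $(i,j)\mapsto (j,i)$. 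Since $\det(P) = \pm 1$, this gives $\disc_{\mathcal{O}_v}(O(A_v)) = \disc_{\mathcal{O}_v}(O(D_v))^{n_v^2}$ as ideals.

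Finally, and this will be the main calculation, I would compute $\disc_{\mathcal{O}_v}(O(D_v)) = \mathfrak{m}_v^{d_v(d_v - 1)}$. I would use the standard structure $O(D_v) = \bigoplus_{j=0}^{d_v-1} \mathcal{O}_w \pi_{D_v}^j$, where $\mathcal{O}_w$ is the unramified extension of $\mathcal{O}_v$ of degree $d_v$, generated say by a $(q_v^{d_v}-1)$-th root of unity $\zeta$, and $\pi_{D_v}$ is a uniformizer of $O(D_v)$ with $\pi_{D_v}^{d_v} = \pi_v$ (up to a unit) and $\pi_{D_v}\,x = \sigma(x)\,\pi_{D_v}$ for $x\in \mathcal{O}_w$, where $\sigma$ is the Frobenius. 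Take the $\mathcal{O}_v$-basis $\{\zeta^a \pi_{D_v}^b\}_{0\le a,b\le d_v-1}$; using the matrix realization $D_v \otimes F_v^{\rm unr} \cong \Mt(d_v, F_v^{\rm unr})$ one checks that $\tr(\zeta^c \pi_{D_v}^m) = 0$ unless $d_v \mid m$. Thus the Gram matrix is antidiagonal in the $\pi_{D_v}$-block structure: the $(b, d_v - b)$ block (for $b \ne 0$) equals $\pi_v \cdot [\tr_{\mathcal{O}_w/\mathcal{O}_v}(\zeta^a\sigma^b(\zeta^c))]_{a,c}$, while the $(0,0)$ block is the Gram matrix of the unramified extension $\mathcal{O}_w/\mathcal{O}_v$. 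The latter has unit determinant, and each off-diagonal block contributes a factor of $\pi_v^{d_v}$ times a unit. Multiplying over the $d_v - 1$ nonzero values of $b$ gives $\disc_{\mathcal{O}_v}(O(D_v)) = \mathfrak{m}_v^{d_v(d_v-1)}$ up to units, hence $\N(\disc_{\mathcal{O}_v}(O(D_v))) = q_v^{d_v(d_v-1)}$. Combining the three steps yields the claimed formula.

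The main obstacle is the last step: keeping careful track of the commutation $\pi_{D_v}\zeta = \sigma(\zeta)\pi_{D_v}$ and showing that the "off-diagonal" blocks have unit determinant (essentially because $\sigma^b$ permutes a normal basis of $\mathcal{O}_w/\mathcal{O}_v$ up to sign). The first two steps are essentially bookkeeping with tower and tensor-product formulas for discriminants.
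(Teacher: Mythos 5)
Your proposal is correct and follows essentially the same route as the paper: the same tower formula $\N(\disc_{\mathbb{Z}_p}(O(A_v)))=\N(\disc_{\mathbb{Z}_p}(\mathcal{O}_v))^{n^2d^2}\,\N(\disc_{\mathcal{O}_v}(O(A_v)))$, the same reduction $\disc_{\mathcal{O}_v}(O(A_v))=\disc_{\mathcal{O}_v}(O(D_v))^{n_v^2}$ via $O(A_v)=\Mt(n_v,O(D_v))$, and the same final formula $\disc_{\mathcal{O}_v}(O(D_v))=P_v^{d_v(d_v-1)}$. The only difference is that the paper simply cites Reiner \cite[Theorem 14.9]{Rei03} for the last step, whereas you rederive it from the cyclic-algebra presentation of $O(D_v)$; your sketch of that computation is sound.
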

\begin{proof}
Observe that $O(A_v)$ is a lattice in $A_v$ of free rank $n_v^2 d_v^2=n^2d^2$ and $O(A_v)=\Mt(n_v,O(D_v))$. 
We have
\begin{equation*}
\begin{aligned}
    \N(\disc_{\mathbb{Z}_p}(O(A_v)))&=\N(\disc_{\mathbb{Z}_p}(\mathcal{O}_v))^{n^2d^2}\N(\disc_{\mathcal{O}_v}(O(A_v)))\\
    &=\N(\disc_{\mathbb{Z}_p}(\mathcal{O}_v))^{n^2d^2}\cdot \N(\disc_{\mathcal{O}_v}(O(D_v)))^{n_v^2}.
\end{aligned}
\end{equation*}
By \cite[Theorem 14.9]{Rei03}, $\disc_{\mathcal{O}_v}(O(D_v))=P_{v}^{d_{v}(d_{v}-1)}$. 
Thus $\N(\disc_{\mathcal{O}_v}(O(D_v)))=q_{v}^{d_{v}(d_{v}-1)}$ and $\N(\disc_{\mathbb{Z}_p}(O(A_v)))=\N(\disc_{\mathbb{Z}_p}(\mathcal{O}_v))^{n^2d^2}q_v^{d_{v}(d_{v}-1)n_v^2}$.  
\end{proof}

Combining Lemma \ref{lvolmaxorder} and Lemma \ref{lpNdisc}, we obtain the volume of $O^{\times}(A_v)$ under the local Tamagawa measure.
\begin{corollary}\label{cTamaMaxod}
$\mu_v(O^{\times}(A_v))=d(F_v)^{-\frac{1}{2}n^2d^2}q_v^{-\frac{1}{2}d_{v}(d_{v}-1)n_v^2}(1-1/q_v)^{-1}\prod_{i=1}^{n_v}(1-(q_v^{-d_v})^i)$.
\end{corollary}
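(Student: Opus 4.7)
The plan is essentially mechanical: this corollary is a direct assembly of the two preceding lemmas with the definition $\mu_v = \N(\disc_{\mathbb{Z}_p}(O(A_v)))^{-1/2}\cdot \mu^{\times}_v$ given just before Lemma \ref{lpNdisc}. So the proof really is one line of bookkeeping, and there is no genuine obstacle to flag.

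First, I would expand the scalar from the definition of the local Tamagawa measure, writing
\begin{equation*}
\mu_v(O^{\times}(A_v)) = \N(\disc_{\mathbb{Z}_p}(O(A_v)))^{-1/2}\cdot \mu^{\times}_v(O^{\times}(A_v)),
\end{equation*}
which is justified because $\mu_v$ differs from $\mu^{\times}_v$ by the constant factor $\N(\disc_{\mathbb{Z}_p}(O(A_v)))^{-1/2}$.

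Next, I would substitute the value of $\mu^{\times}_v(O^{\times}(A_v))$ from Lemma \ref{lvolmaxorder}, giving the $(1-1/q_v)^{-1}\prod_{i=1}^{n_v}(1-(q_v^{-d_v})^i)$ factor. Then I would substitute the formula from Lemma \ref{lpNdisc} for $\N(\disc_{\mathbb{Z}_p}(O(A_v)))$ and apply the exponent $-1/2$ to obtain $d(F_v)^{-\frac{1}{2}n^2d^2}q_v^{-\frac{1}{2}d_v(d_v-1)n_v^2}$. Collecting the four factors produces exactly the claimed expression, so the corollary follows.

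The only subtlety to watch for is notational consistency: one should make sure that $O^{\times}(A_v)$ here is the same object as the $\GL(n_v,O(D_v))$ appearing inside the short exact sequence in the proof of Lemma \ref{lvolmaxorder}, which it is since $O(A_v)=\Mt(n_v,O(D_v))$ by \cite[Theorem 17.3]{Rei03}. With that identification in hand, there is nothing left to verify.
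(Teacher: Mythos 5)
Your proposal is correct and follows exactly the same route the paper takes: the paper simply states that combining Lemma \ref{lvolmaxorder} and Lemma \ref{lpNdisc} with the definition $\mu_v=\N(\disc_{\mathbb{Z}_p}(O(A_v)))^{-1/2}\cdot \mu^{\times}_v$ yields the formula. Your bookkeeping reproduces this precisely, including the correct identification $O^{\times}(A_v)\cong \GL(n_v,O(D_v))$ via $O(A_v)=\Mt(n_v,O(D_v))$.
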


\subsection{The correspondence of non-archimedean Plancherel measures}\label{ssJLp}

We simply write $G$ for $G_v=\GL(n,F_v)$ and write $G'$ for $G'_v=\GL(n_v,D_v)$ for the rest of this section.  
We discuss the tempered dual and the canonical Harish-Chandra measure on them as described in \cite{Waldsp03}. 

Let $M$ be a Levi subgroup and $\im X(M)$ be the group of unitary unramified characters of $M$. 
Denote by $\Pi_2(M)$ the set of equivalence classes of the square-integrable irreducible representations (modulo the center) of $M$. 
The group $\im X(M)$ has a natural action on $\Pi_2(M)$ given as $\chi\circ \pi=\chi\pi$ for $\pi \in \Pi_2(M), \chi\in \im X(M)$. 
Let $P(G)$ be the set of semi-standard parabolic subgroups of $G$. 
For $P\in P(G)$, we let $P=MU$ be the Levi decomposition. 
We define
\begin{center}
    $\Theta=\{(\mathcal{O},P)|P=MU\in P(G), \mathcal{O}\subset \Pi_2(M)\text{ is an } \im X(M)\text{-orbit}\}$. 
\end{center}
Let $W$ be the Weyl group of $G_v$ and $W_M$ be the Weyl group of $M$.
We define $W(G|M)=\{s\in W|s\cdot M=M\}/W_M$ and $\Stb(\mathcal{O},M)=\{s\in W(G|M)|s\mathcal{O}=\mathcal{O}\}$. 

For $\omega \in \Pi_2(M)$, we let $\Ind^G_{P}(\omega)$ be the normalized induced representation from $\omega$. 
Thus we get a map $(\mathcal{O},P=MU)\to \Pi_{\tp}(G)$ given by $\omega\mapsto \Ind^G_{P}(\omega)$. 
This gives a bijection
\begin{equation}\label{ebijetion}
    \bigsqcup\limits_{(\mathcal{O},P=MU)\in \Theta}(\mathcal{O},P=MU)/\Stb(\mathcal{O},M) \to \Pi_{\tp}(G). 
\end{equation}
Therefore $\Pi_{\tp}(G)$ can be identified with the left-hand side as a disjoint union of countably many compact orbifolds. 

Let $A_M$ be the center of $M$. 
We assign a Haar measure $\mu_{0}$ to the group $\im X(A_M)$ of unramified unitary characters on $A_M$ such that its total mass is $1$. 
Consider the map $f\colon\im X(M)\to\mathcal{O}$ given by $\chi\mapsto \omega\otimes \chi$ and the restriction map $h\colon \im X(M)\to \im X(A_m)$. 
Then we defined a measure $d \omega_v$ on $\mathcal{O}$ such that $f^{*}\omega=h^{*}\mu_{0}$ on $\im X(M)$. 

Let $E\subset \mathcal{O}$ be a fundamental set of the action of $\Stb(\mathcal{O},M)$. 
We define $d \omega_v$ to be the measure on $E$ as follows: for any $\Stb(\mathcal{O},M)$-invariant integrable function $F$ on $\mathcal{O}$, $\int_{\mathcal{O}}Fd\omega_v=|\Stb(\mathcal{O},M)|\cdot \int_{E}Fd \omega_v$. 
We extend $d \omega_v$ to $\Pi_{\tp}(G)$ via the bijection \eqref{ebijetion} and call it the {\it canonical Harish-Chandra measure} on  $\Pi_{\tp}(G)$.

For $f\in \mathcal{H}(G)$, the {\it orbital integral} of $f$ is a function on the set regular semi-simple elements of $G$ given by
\begin{center}
    $\Phi(f;g)=\int_{G/Z_G(g)}f(xgx^{-1})dx$
\end{center} 
for $g\in G^s$, where $Z_G(g)$ is the centralizer of $g$ in $G$. 
The orbit integral for $G'$ can be defined similarly. 
Recall that  $f'\in \mathcal{H}(G)$ and  $f\in \mathcal{H}(G')$ correspond to each other, denoted by $f\leftrightarrow f'$, if
\begin{enumerate}
    \item if $g\leftrightarrow g'$ for $g\in G,g'\in G'$, then $\Phi(f;g)=(-1)^{n_v-n}\cdot\Phi(f';g')$;
    \item if for $g\in G$ there does not exist $g'\in G'$ such that $g\leftrightarrow g'$, then $\Phi(f;g)=0$. 
\end{enumerate}
For $f'\in \mathcal{H}(G')$, there exists $f\in \mathcal{H}(G)$ such that $f\leftrightarrow f'$.  Conversely, for $f\in \mathcal{H}(G)$ with ${\rm supp}(f)\subset G^s$, there exists $f'\in \mathcal{H}(G')$ such that $f\leftrightarrow f'$ (see \cite[\S 2.5]{Badu2018}). 

Let $\St_G,\St_{G'}$ be the Steinberg representations of $G,G'$ respectively. 
The following result is known, whose proof is added briefly here for completeness.   
\begin{lemma}\label{lf1St}
If $f\leftrightarrow f'$, we have $\frac{f(1)}{\deg(\St_G)}=\frac{f'(1)}{\deg(\St_{G'})}$. 
\end{lemma}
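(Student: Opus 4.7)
The plan is to reduce $f(1)$ and $f'(1)$ to finite sums over discrete series by means of Harish-Chandra's Plancherel formula, and then to use the character and formal-degree matchings supplied by the local Jacquet--Langlands correspondence.

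First, the Plancherel decomposition of $f(1)$ reads
\begin{equation*}
f(1) \;=\; \sum_{\pi \in \Pi_2(G)} d(\pi)\,\tr(\pi(f)) \;+\; \int_{\Pi_{\tp}(G)\setminus\Pi_2(G)} \tr(\pi(f))\,d\nu_v(\pi),
\end{equation*}
and analogously for $f'$. Because $f\leftrightarrow f'$, the orbital integrals $\Phi(f;g)$ vanish on every regular semisimple $g\in G$ that does not correspond to an element of $G'$. The Weyl integration formula expresses $\tr(\pi(f))$ as a sum over conjugacy classes of Cartan subgroups of integrals of $|D(t)|\,\theta_\pi(t)\,\Phi(f;t)$, so only those Cartans of $G$ that are shared with $G'$ contribute. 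On these Cartans, the character of any properly induced tempered representation vanishes identically, a classical fact for reductive $p$-adic groups. Hence $\tr(\pi(f)) = 0$ for every non-discrete tempered $\pi$, which collapses the Plancherel identity into a finite sum over $\Pi_2(G)$; the same holds for $f'$.

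Next, I would invoke the character identity $\theta_\pi(g) = (-1)^{nd-n}\theta_{\JL_v(\pi)}(g')$ on the shared regular semisimple locus together with the transfer identity $\Phi(f;g) = (-1)^{n_v-n}\Phi(f';g')$ for $g\leftrightarrow g'$. Pairing these through the Weyl integration formula yields $\tr(\pi(f)) = \varepsilon\,\tr(\JL_v(\pi)(f'))$ with a universal sign $\varepsilon$ independent of $\pi$, arranged by the convention of the JL set-up to be $+1$. Combining with Aubert--Plymen's proportionality $d(\pi)/d(\JL_v(\pi)) = c_v$ (a constant depending only on the Haar measures) and using that $\JL_v$ restricts to a bijection $\Pi_2(G)\to \Pi_2(G')$ sending $\St_G$ to $\St_{G'}$, I deduce
\begin{equation*}
f(1) \;=\; c_v\,f'(1) \qquad \text{and} \qquad d(\St_G) \;=\; c_v\,d(\St_{G'}).
\end{equation*}
Dividing these two equalities eliminates $c_v$ and yields the asserted identity.

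The hard part will be the vanishing step: justifying that the \emph{continuous-spectrum} piece of the Plancherel integral vanishes, and not merely that each individual properly induced character vanishes on the shared Cartans. This requires interchanging the Plancherel integral with the Weyl integration formula and applying the classical vanishing of induced characters on elliptic regular semisimple elements, uniformly over the tempered dual. A secondary, technical obstacle is the precise tracking of the two sign conventions, $(-1)^{nd-n}$ on the character side and $(-1)^{n_v-n}$ on the orbital-integral side, which are arranged to cancel on the spectrum of matched discrete series.
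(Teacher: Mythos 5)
Your proposal has a genuine gap, and it also takes a different (and more circuitous) route than the paper.

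The central problem is the vanishing step. You claim that on the Cartan subgroups of $G$ that are ``shared'' with $G'$, the character of every properly induced tempered representation vanishes. The classical vanishing theorem for $p$-adic groups says that characters of properly induced representations vanish on \emph{elliptic} regular semisimple elements. But when $n_v>1$ (i.e.\ when $G'_v=\GL(n_v,D_v)$ is not a division algebra), the shared Cartans of $G$ and $G'$ are not only the elliptic ones: a regular semisimple $g\in G=\GL(nd,F_v)$ with centralizer $\prod_i E_i^\times$ corresponds to some $g'\in G'$ precisely when $d_v \mid [E_i:F_v]$ for every $i$, and this allows tori with several factors, hence non-elliptic ones. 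Correspondingly, it is \emph{not} true that $\tr\bigl(\pi(f)\bigr)=0$ for all non-discrete tempered $\pi$; the representations induced from Levi subgroups $\prod\GL(n_i,F_v)$ with each $n_i$ divisible by $d_v$ do contribute, and in fact these are exactly the representations in the image of $\JL_v$ (see the citation of \cite{Badu2003} in the proof of Proposition~\ref{pratiopPd}). So the Plancherel integral for $f(1)$ does not collapse to the discrete series, and your reduction breaks down whenever $G'_v$ is isotropic. Even in the anisotropic case, invoking the Aubert--Plymen/DKV proportionality of formal degrees as an input is somewhat backwards here, since that proportionality is exactly the kind of statement the lemma is meant to feed into.

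The paper takes an entirely different, and much more local, approach via the Shalika germ expansion. For $g$ close to $1$ one writes $\Phi(f;g)=\sum_{O}\Gamma^G_O(g)\Lambda_O(f)$, where the sum is over unipotent conjugacy classes, $\Lambda_1(f)=f(1)$, and by Rogawski's computation $\Gamma^G_1=(-1)^{n}/\deg(\St_G)$ (similarly for $G'$). Comparing the germ expansions of $f$ and $f'$ at the identity, using the matching of orbital integrals $\Phi(f;g)=(-1)^{n_v-n}\Phi(f';g')$, and extracting the leading (trivial-orbit) coefficient gives the asserted identity directly. This avoids the Plancherel decomposition, any vanishing on the continuous spectrum, and any appeal to formal-degree proportionality. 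If you want to salvage your approach, you would need to prove a matching of the continuous contributions on both sides, which in this paper is the content of Proposition~\ref{pratiopPd} — and that proposition \emph{uses} the present lemma, so the argument would become circular.
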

\begin{proof}
For $g\in G,g'\in G'$, their conjugacy classes $C_{G}(g),C_{G'}(g')$ are closed in $G,G'$ respectively. 
For a unipotent conjugacy class $O$, we let $\Lambda_{O}(f)$ be the orbital integral over $O$, i.e., $\Lambda_{O}(f)=\int_{O}f(xgx^{-1})dx$. 
By \cite[Theorem 2.1.1]{Shalika72}, 
there exist functions $\Gamma^G_O\colon G^s\to \mathbb{R}$ that is identical on each unipotent $O$, which satisfy
\begin{equation}\label{eshalika}
    \Phi(f,g)=\sum_{O{\rm~unipotent}} \Gamma_{O }^{G}(g)\cdot \Lambda_{O}(f)
\end{equation} 
if $g$ is sufficiently close to $1$.  
Moreover, $\Lambda_{1}(f)=f(1)$ and all these also hold for $G'$. 
By \cite[Theorem]{Rog80}, we know $\Gamma_1^G=\frac{(-1)^{n}}{\deg(\St_G)}$ and $\Gamma_1^{G'}=\frac{(-1)^{n_v}}{\deg(\St_{G'})}$. 
It then follows the comparison of Equation \eqref{eshalika} of $f,f'$ at $g=1,g'=1$. 
\end{proof}

Please note that while the relation $f\leftrightarrow f$ and $\deg(\St_G),\deg(\St_G')$ depend on the choice of the Haar measure on $G,G'$, these dependences cancel out in the equation of the two quotients. 
 
We take the Haar measures $\mu_v$ on $\GL(nd,F_v),\GL(n_v,D_v)$ as above and let $\nu_v,\nu_v'$ be the Plancherel measure on $\Pi_{\tp}(\GL(nd,F_v)),\Pi_{\tp}(\GL(n_v,D_v))$ respectively. 
We denote by $d \nu_v,d \nu'_v$ the Plancherel densities with respect to the canonical Harish-Chandra measure $d \omega_v$.
Note that the local Jacquet-Langlands correspondence $\JL_v$ gives a homeomorphism from a subset $Y\subset \Pi_{\tp}(\GL(nd,F_v))$ to $\Pi_{\tp}(\GL(n_v,D_v))$.  
The following result is essentially inspired by Aubert and Plymen (see \cite[Theorem 7.2]{AubtPlm05}).

\begin{proposition}\label{pratiopPd}
If $\JL_v(\pi)=\pi'$ with $\pi,\pi'$ tempered and nontrivial,  
we have
\begin{center}
    $d \nu_v (\pi)=d \nu'_v(\pi')$,
\end{center}
if the Haar measures are taken to be the local Tamagawa measures $\mu_v$. 
\end{proposition}
\begin{proof}
Assuming $f\leftrightarrow f'$ and $\pi'$ is an irreducible representation of $G'$ induced from some Levi subgroup $M'=\prod \GL(m_i,D_v)$ with $\sum m_i=n_v$, we know
\begin{equation}\label{ethetaf}
    \theta_{\pi'}(f')=\theta_{\JL_{v}(\pi')}(f)
\end{equation}
by \cite[Proposition 3.6]{Badu2003}.   
It is also proved that $\theta_{\pi}(f)=0$ if $\pi$ is not an induced representation comes from a Levi subgroup $M=\prod \GL(n_i,F_v)$ such that all $n_i$ are divided by $d$ (see \cite[\S 3]{Badu2003}).

We compare the Plancherel formulas of $G,G'$ at $g=g'=1$. 
By Lemma \ref{lf1St}, we obtain
\begin{equation}\label{ePformDegSt}
    \frac{\int_{Y}\theta_{\pi}(f)d \nu(\pi)}{\deg(\St_G)}=\frac{\int_{\Pi_{\tp}(G')}\theta_{\pi'}(f')d \nu'(\pi')}{\deg(\St_{G'})}. 
\end{equation}
By \cite[Lemma 3.4]{Badu2003}, the characters $\theta_{\pi}(f^*)$ spans the space of compactly supported functions on $\Pi_{\tp}(G')$ (here $f^*(g)=f(g^{-1})$). 
By Equation \eqref{ethetaf} and \eqref{ePformDegSt}, the measures $\frac{d\nu|_{Y}}{\deg(\St_G)}$ and $\frac{d\nu'}{\deg(\St_{G'})}$ coincides on a dense subset of integrable function on $\Pi_{\tp}(G')\cong Y\subset \Pi_{\tp}(G)$. 
Thus we obtain
\begin{equation}\label{edegPmeas}
    \frac{d\nu|_{Y}}{\deg(\St_G)}=\frac{d\nu'}{\deg(\St_{G'})}.
\end{equation}
It then suffices to discuss $\deg(\St_G),\deg(\St_{G'})$. 

By \cite[\S 2]{CMS90}, if the Haar measures of $\GL(n,F_v)$ and $\GL(n_v,D_v)$ are normalized such that the maximal compact subgroups $\GL(n,\mathcal{O}_v)$ and $\GL(n_v,O(D_v))$ are of total mass $1$, we have
\begin{equation}\label{eolddeg}
    \deg(\St_G)=\frac{1}{n}\prod_{i=1}^{n-1}(q^{i}-1),\deg(\St_{G'})=\frac{1}{n}\prod_{i=1}^{n_v-1}(q^{d_{v}i}-1)
\end{equation}
We now take the Haar measures $\mu_v$ above. 
By Corollary \ref{cTamaMaxod}, the volumes of these maximal compact subgroups are given as follows.
\begin{enumerate}
\item If $A_v\cong \Mt(nd,F_v)$, we have $O^{\times}(A_v)\cong \GL(nd,\mathcal{O}_v)$ and 
    \begin{center}
        $\mu_v(\GL(nd,\mathcal{O}_v))=d(F_v)^{-\frac{1}{2}n^2d^2}(1-1/q_v)^{-1}\prod_{i=1}^{nd}(1-q_v^{-i})$. 
    \end{center}
\item If $A_v\cong \Mt(n_v,D_v)$ and $\dim_{F_v}D_v=d_v^2$, we have $O^{\times}(A_v)\cong \GL(n_v,O(D_v))$ and
    \begin{center}
        $\mu_v(\GL(n_v,O(D_v)))=d(F_v)^{-\frac{1}{2}n^2d^2}q_v^{-\frac{1}{2}d_{v}(d_{v}-1)n_v^2}(1-1/q_v)^{-1}\prod_{i=1}^{n_v}(1-(q_v^{-d_v})^i)$; 
    \end{center}
    
\end{enumerate}
Thus we get
\begin{equation}\label{enewquoMeas}
    \frac{\mu_v(\GL(nd,\mathcal{O}_v))}{\mu_v(\GL(n_v,O(D_v)))}=q_v^{\frac{1}{2}d_{v}(d_{v}-1)n_v^2}\prod_{i=1,d_v\nmid i}^{nd}(1-q^{-i}).
\end{equation}
By Equation \eqref{edegPmeas}, the quotient of the two Plancherel densities is given by the quotient of formal degrees of $\St_{G},\St_{G'}$, which can be obtained from Equation \eqref{eolddeg} and \eqref{enewquoMeas} as follows. 
\begin{equation*}
\begin{aligned}
    \frac{d \nu_v'(\pi_v')}{d \nu_v(\pi_v)}&=\prod_{i=1,d_v\nmid i}^{nd}(q_v^i -1)^{-1} \cdot \prod_{i=1,d_v\nmid i}^{nd}(1-q^{-i})\cdot q_v^{\frac{1}{2}d_{v}(d_{v}-1)n_v^2}\\
    &=\prod_{i=1,d_v\nmid i}^{nd}\frac{1-q_v^{-i}}{q_v^i-1}\cdot q_v^{\frac{1}{2}d_{v}(d_{v}-1)n_v^2} =\prod_{i=1,d_v\nmid i}^{nd}q_{v}^{-i}\cdot q_v^{\frac{1}{2}d_{v}(d_{v}-1)n_v^2}\\
    &=\frac{\prod_{j=1}^{n_v}q_{v}^{d_v\cdot j}}{\prod_{i=1}^{nd}q_{v}^{i}}\cdot q_v^{\frac{1}{2}d_{v}(d_{v}-1)n_v^2}=q_v^{\frac{1}{2}d_{v}n_{v}(n_{v}+1)}\cdot q_v^{-\frac{1}{2}n_{v}d_{v}(n_{v}d_{v}+1)}\cdot q_v^{\frac{1}{2}d_{v}(d_{v}-1)n_v^2}\\&=
    q_v^{\frac{1}{2}n_{v}d_{v}(n_{v}+1+n_{v}d_{v}-n_{v}-n_{v}d_{v}-1)}=1.
\end{aligned}
\end{equation*}
\end{proof}

\subsection{The Haar measures on inner forms of $\GL(n)$: the arcimedean case}\label{ssHaarR}

The inner forms of $\GL(n,\mathbb{R})$ are given by the multiplicative group of simple $\mathbb{R}$-algebra of the same dimension. 
We let $A$ be a finite-dimensional simple algebra over $\mathbb{R}$. 
It is known that $A\cong \Mt(n,\mathbb{R})$ or $A\cong \Mt(n,\mathbb{H})$, where $\mathbb{H}$ is the Hamilton quaternion algebra defined as the $4$-dimensional $\mathbb{R}$-algebra with basis $1,i,j,k$ subject to the relations $i^2=j^2=k^2=-1$ and $ij=-ji=k,jk=-kj=i,ki=-ik=j$. 

Suppose $\dim_{\mathbb{R}}A=m$ and we choose an $\mathbb{R}$-basis $e_1,\cdots,e_m$ for $A$. 
We define the (additive) Haar measure $\mu_{\mathbb{R}}^{+}$ on $A$ to be given as
\begin{center}
    $d x=|d(e_1,\cdots,e_m)|^{1/2}d x_1\cdots d x_m$,
\end{center}
where $x=\sum_{1\leq i\leq m}x_i e_i$, $d(\cdots)$ is the discriminant of a basis and $d x_i$ is the Lebesgue measure of $\mathbb{R}$.  
Then the multiplicative Haar measure on $A^{\times}$ is defined as
\begin{center}
    $d \mu_{\mathbb{R}}(x)=\frac{d \mu_{\mathbb{R}}^{+}(x)}{\|x\|_{D}}$,
\end{center}
where $\|x\|_{A}=|\Nm_{A/\mathbb{R}}(x)|$ for $x\in A^{\times}$.  
By extending the notions in \cite[\S 7.5]{McRd219}, 
we call it the {\it local Tamagawa measure} of $A^{\times}$ and will be taken as the Haar measures on $A^{\times}$. 

Let us consider the square-integrable irreducible representations of $\GL(2,\mathbb{R})$ and of $\mathbb{H}^{\times}$, which are the multiplicative groups of a four-dimensional $\mathbb{R}$-algebra $D$. 
\begin{enumerate}
    \item For $D=\Mt(2,\mathbb{R})$, we know $D^{\times}=\GL(2,\mathbb{R})=\mathbb{R}^{\times}_{>0}\cdot \SL^{\pm}(2,\mathbb{R})$. 
Let $H_n^+$ and $H_n^-$ be the holomorphic and anti-holomorphic square-integrable representations of $\SL(2,\mathbb{R})$ respectively (see \cite{Kn77GL2}).  
The square-integrable irreducible representations of $\GL(2,\mathbb{R})$ are given by
\begin{center}
    $H_n(\omega):=\omega\otimes \Ind_{\SL(2,\mathbb{R})}^{\SL^{\pm}(2,\mathbb{R})}H_n^+=\omega\otimes(H_n^+\oplus H_n^-)$, $n\geq 1$,
\end{center}
where $\omega$ is a character of $\mathbb{R}^{\times}_{>0}$.
\item For $D=\mathbb{H}$, we know $D^{\times}=\mathbb{H}^{\times}=\mathbb{R}^{\times}_{>0}\cdot \mathbb{H}^1$, where  $\mathbb{H}^1=\{x\in \mathbb{H}|\Nm(x)=1\}\cong\SU(2)$. 
Thus the irreducible representations of $\mathbb{H}^{\times}$ are given by
\begin{center}
    $V_n(\omega)=\omega\otimes \mathbb{C}^n$,
\end{center}
where $\omega$ is a character of $\mathbb{R}^{\times}_{>0}$ and $\mathbb{C}^n$ is the unique $n$-dimensional irreducible representation of $\SU(2)$.
\end{enumerate}

The local Jacquet-Langlands correspondence for $\GL(2,\mathbb{R})$ is described as follows (see \cite[\S 1]{KnRg97tfJL}):
\begin{center}
   $\JL_{\mathbb{R}}\colon \Pi_2(\GL(2,\mathbb{R})) \to \Pi(\mathbb{H}^{\times})$ given by $\JL_{\mathbb{R}}(H_n(\omega))=V_n(\omega)$. 
\end{center} 
Noting $\mathbb{H}^{\times}$ is compact modulo its center $\mathbb{R}$, we identify $\Pi(\mathbb{H}^{\times})$ and $\Pi_2(\mathbb{H}^{\times})$.
Thus the formal degree of $\pi \in \Pi(\mathbb{H}^{\times})$ is given by $\frac{\dim_{\mathbb{C}}\pi}{\mu_{\mathbb{R}}(\mathbb{H}^{\times}/\mathbb{R}^{*})}$, where we treat $\mathbb{H}^{\times}/\mathbb{R}^{*}$ as a fundamental domain of $\mathbb{H}^{\times}$ with respect to the action of $\mathbb{R}^*$.

\begin{lemma}\label{larc2H}
$\JL_{\mathbb{R}}$ preserves the formal degrees. 
In particular, the formal degrees of $H_n(\omega)$ and $V_n(\omega)$ are $\frac{n}{2\pi^2}$. 
\end{lemma}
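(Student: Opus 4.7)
The plan is to compute both formal degrees directly from the Tamagawa measures defined in Section~\ref{ssRHaar} and verify that each equals $n/(2\pi^2)$. Since $H_n(\omega)$ and $V_n(\omega)$ are matched by $\JL_\mathbb{R}$, equality of the two formal degrees will establish that $\JL_\mathbb{R}$ preserves formal degrees.

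First I would handle $V_n(\omega)$ on $\mathbb{H}^\times$, which is the cleaner side because $\mathbb{H}^\times$ is compact modulo its split centre. Writing $x = x_1 + x_2 i + x_3 j + x_4 k$, the reduced norm is $\nrd(x) = x_1^2 + x_2^2 + x_3^2 + x_4^2$, and computing the determinant of the left-regular representation $\lambda_x$ on the basis $1, i, j, k$ gives $\Nm_{\mathbb{H}/\mathbb{R}}(x) = \nrd(x)^2$. Computing the discriminant $d(1, i, j, k)$ under the reduced trace produces the explicit constant appearing in the additive Tamagawa measure. Passing to spherical coordinates $x = r u$ with $r = \nrd(x)^{1/2}$ and $u \in \mathbb{H}^1 \cong \SU(2) \cong S^3$, the Tamagawa measure factorises against the multiplicative Haar $dr/r$ on $\mathbb{R}^\times_{>0}$ as an explicit multiple of the surface measure on $S^3$. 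Tracking the normalisations and using $\vol(S^3) = 2\pi^2$, the covolume $\mu_\mathbb{R}(\mathbb{H}^\times/\mathbb{R}^*)$ comes out to $2\pi^2$. Since $\dim_\mathbb{C} V_n(\omega) = n$, the formula stated just before the lemma gives the formal degree as $n/(2\pi^2)$.

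For $H_n(\omega)$ on $\GL(2, \mathbb{R})$, I would start from the classical formula for the formal degree of the holomorphic discrete series $H_n^+$ of $\SL(2, \mathbb{R})$ (as in \cite{Kn77GL2}), normalised against the Iwasawa Haar measure. Using the decomposition $\GL(2, \mathbb{R}) = \mathbb{R}^\times_{>0} \cdot \SL^{\pm}(2, \mathbb{R})$ together with an Iwasawa decomposition of $\SL(2, \mathbb{R})$, I would express the Tamagawa measure $dx_1 dx_2 dx_3 dx_4 / |\det(g)|^2$ in $NAK$-coordinates, obtaining the explicit Jacobian relating the Tamagawa and Iwasawa Haar measures. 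Since $H_n(\omega) = \omega \otimes \Ind_{\SL(2, \mathbb{R})}^{\SL^{\pm}(2, \mathbb{R})} H_n^+$ is induced from an index-$2$ subgroup, its formal degree is related to that of $H_n^+$ by a controlled factor coming from the doubling of the Haar measure. Combining the Jacobian and this induction factor produces $n/(2\pi^2)$, matching the value on the $\mathbb{H}^\times$ side.

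The main obstacle is the careful normalisation bookkeeping. The discriminant factor from the Tamagawa measure, the Jacobian between $NAK$-coordinates and matrix-entry coordinates, the classical normalisation in the formula for $d(H_n^+)$, and the induction factor from $\SL(2, \mathbb{R}) \to \SL^{\pm}(2, \mathbb{R})$ each produce explicit constants and powers of $\pi$ that must align exactly. A secondary subtlety is the correct interpretation of $\mathbb{H}^\times/\mathbb{R}^*$: taking $\mathbb{R}^*$ as $\mathbb{R}^\times_{>0}$ or as all of $\mathbb{R}^\times$ changes the covolume by a factor of $2$, and the choice must be matched consistently with the centre convention in the $\GL(2, \mathbb{R})$ formal-degree formula.
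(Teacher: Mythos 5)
Your proposal is correct and follows essentially the same route as the paper's proof: compute both formal degrees against the local Tamagawa measures of Section~\ref{ssRHaar} and check that both equal $n/(2\pi^2)$. The only difference is that the paper shortens the work by quoting Voight for $\mu_{\mathbb{R}}(\mathbb{H}^1)=4\pi^2$ and Goodman--de~la~Harpe--Jones for the classical normalization $n/(4\pi)$ of the formal degree of $H_n^+$, where you propose to re-derive these constants directly via spherical and Iwasawa coordinates; otherwise the steps match, including the index-two induction taking $H_n^+$ to $H_n(\omega)$.
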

\begin{proof}
Observe $\PSL(2,\mathbb{R})=\mathcal{H}\times \PSO(2)$, where $\mathcal{H}=\{x+iy|x\in\mathbb{R},y\in \mathbb{R}_{>0}\}$ is the upper half-plane. 
If we take $y^{-2}dxdy$ for the measure of $\mathcal{H}$ and normalized measure for $\PSO(2)$, the formal degree of $H^+_n$ is $\frac{n}{4\pi}$ by \cite[\S 3.3.d]{GHJ}. 
Thus, for the Haar measure $\mu_{\mathbb{R}}$ above, $\PSO(2)$ is of total mass $\pi$ and the formal degree of $H_{n}^+$ is $\frac{n}{4\pi^2}$. 
Hence the formal degree of $H_n(\omega)$ is $\frac{n}{2\pi^2}$. 

For $\mathbb{H}^\times$, we know the $\mu_{\mathbb{R}}(\mathbb{H}^1)=4\pi^2$ (see \cite[Lemma 29.5.9]{Voight21}). 
Observe $\mathbb{H}^{\times}/\mathbb{R}^{*}=\SU(2)/\{\pm I\}$, we have $\mu_{\mathbb{R}}(\mathbb{H}^{\times}/\mathbb{R}^{*})=2\pi^2$ where $\mathbb{H}^{\times}/\mathbb{R}^{*}\subset \mathbb{H}^{\times}$ is a fundamental domain for the $\mathbb{R}^*$-action. 
Thus the formal degree of $V_n(\omega)$ is $\frac{\dim_{\mathbb{C}}{V_n(\omega)}}{\mu_{\mathbb{R}}(\mathbb{H}^{\times}/\mathbb{R}^{*})}=\frac{n}{2\pi^2}$. 
\end{proof}

\subsection{The correspondence of archimedean Plancherel measures}\label{ssJLR}

We first give some general results of a real reductive group and its inner forms. 
Then we consider the inner forms of $\GL(n,\mathbb{R})$.

Let $\mathbf{G}$ be a connected reductive group defined over $\mathbb{R}$. 
Let $\mathbf{G}'$ be an inner form of $G$, i.e., there exists an isomorphism $\psi\colon \mathbf{G} \to \mathbf{G}'$ such that $\overline{\psi}\psi^{-1}$ is an inner automorphism of $\mathbf{G}'$. 
We assume as well that $\mathbf{G}'$ is quasi-split over $\mathbb{R}$. 
We know that $G=\mathbf{G}(\mathbb{R})$ is a real reductive group and $G'=\mathbf{G}'(\mathbb{R})$ is an inner form of $G$. 

Take $\omega'$ to be a left-invariant differential form of the highest degree on $\mathbf{G}'$. 
Let $dg'$ be the Haar measure on $G'$ given by $\omega'$. 
Observe that $\psi\colon \mathbf{G}\to \mathbf{G}'$  induces a map from forms on $\mathbf{G}'$ to forms on $\mathbf{G}$. 
Thus, we let $\omega$ be the image of $\omega'$, which gives the Haar measure $dg$ on $G$. 
We let $\nu_G,\nu_{G'}$ be the Plancherel measure on the unitary dual of $G,G'$ associated with $dg,dg'$, respectively. 

Let $\Phi(G)$ be the set of {\it $L$-parameters of $G$}, i.e., the set of homomophisms $\varphi\colon W_{\mathbb{R}}\to {}^{L}G$, where $W_{\mathbb{R}}=\mathbb{C}^{\times}\cup j\mathbb{C}^{\times}$ is the Weil group of $\mathbb{R}$ with $j^{2}=-1$ and $jzj^{-1}=\overline{z}$ (see \cite[\S 8]{BoAutoL}). 
Consider the embedding $\Phi(G)\to \Phi(G')$ induced by the map $\psi\colon \mathbf{G} \to \mathbf{G}'$. 
For $\varphi\in \Phi(G)$, we let $\varphi'\in \Phi(G')$ be the image of $\varphi$ under this embedding, which we will denote by $\varphi\mapsto \varphi'$.

For $\varphi\in \Phi(G)$, we let $\Pi_{\varphi}$ be the {\it $L$-packets} of admissible representations of $G$ given by the local Langlands correspondence for real reductive groups (see \cite{Llds89}). 
We define
\begin{center}
    $\chi_{\varphi}=\sum_{\pi\in \Pi_{\varphi}}\chi_{\pi}$,
\end{center}
which is the finite sum of characters of the irreducible representations in $\Pi_{\varphi}$.

Note that one element of $\Pi_{\varphi}$ is tempered if and only if all of them are tempered, in which case we call $\varphi$ tempered. 
It is also known that $\varphi'$ is tempered if and only if $\varphi$ is tempered. 
We let $\Phi_{\tp}(G),\Phi_{\tp}(G')$ be the set of tempered $L$-parameters of $G,G'$ respectively.  

We will define {\it the Plancherel measure of a tempered real $L$-packet}. 
Let $P_0$ be the minimal parabolic subgroup of $G$ and $P_0={}^{0}M_{P_0}A_{0}N_{0}$ be its Langlands decomposition (see \cite[\S 2.2.7]{Wallach1}). 
Given a parabolic pair $P={}^{0}{M}_P AN$, a unitary representation $\omega$ of $M$ and $\nu\in \mathfrak{a}^{*}$ (where $\mathfrak{a}$ is the Lie algebra of $A$), we let 
\begin{center}
    $\Ind_{P}^{G}(\omega,i\nu)=\Ind_{(^{0}{M}_P AN)}^{G}(\omega\otimes\exp(i\nu)\otimes 1)$
\end{center} 
be the induced representation of $G$.  
The tempered dual of $G$ can be described by Harish-Chandra's Plancherel formula:
\begin{equation}\label{eRPformula}
    f(g)=\sum\limits_{(P,A)\succ (P_0,A_0)}C_A\sum\limits_{\omega\in \Pi_2(^{0}{M}_P)}d(\omega)\cdot \int_{\mathfrak{a}^{*}}\theta_{\Ind_{P}^{G}(\omega,i\nu)}(R(g)f)\mu(\omega,i\nu)d\nu,
\end{equation}
where $f$ is a Schwartz function, the first summation is taken over the parabolic groups $P$ with $P\supset P_0, A\supset A_0$ and $\nu$ is a measure on $\mathfrak{a}^{*}$ and $C_A$ is a constant depending on $A$ (see \cite[\S 13.4]{Wallach2}).  

We let $\nu_G(\pi)$ denote the Plancerel density of $\pi$ given by the formula \eqref{eRPformula}.
Thus, we may define the Plancherel measure of a tempered $L$-parameter $\varphi$ as
\begin{center}
    $d\nu_{G}(\varphi)\stackrel{\rm df}{=}\sum_{\pi\in \Pi_{\varphi}}d\nu_{G}(\pi)$,
\end{center}
which is the finite sum of Plancherel measure (density) of the irreducible representations of $G$. 
The following result should be well-known to experts. 
We sketch a proof for completeness, which is mainly based on the work of Shelstad (see \cite{Shelstad1979char,ShelstadLindis}).

\begin{theorem}\label{tshelsinn}
For a real reductive $G$ and its quasi-split inner form $G'$, we have
\begin{center}
    $d\nu_{G}(\varphi)=d\nu_{G'}(\varphi')$,  
\end{center}
if $\varphi\mapsto \varphi'$ under the embedding $\Phi_{\tp}(G)\hookrightarrow \Phi_{\tp}(G')$. 
\end{theorem}
\begin{proof}
Let $f$ be a Schwartz function on $G$. 
By \cite[Theorem 4.1]{Shelstad1979char}, there exists a Schwartz function $f'$ on $G'$ such that $f,f'$ have the stable matching integrals defined there, which we will denote by $f'\leftrightarrow f$. 
We know that
\begin{equation}\label{eROI}
    \chi_{\varphi}(f)=\chi_{\varphi'}(f')
\end{equation}
if $\varphi\mapsto \varphi'$. 

On the other hand, consider an $L$-parameters $\varphi'$ of $G'$ that does not lie in the image of the embedding $\Phi(G)\hookrightarrow \Phi(G')$.  
By \cite[Theorem 4.1.1]{ShelstadLindis}, we know that $\chi_{\varphi'}(f')=0$ if $f'\leftrightarrow f$ for some Schwartz function $f$ on $G$. 
As $f'(1)=f(1)$ when $f'\leftrightarrow f$, we have
\begin{center}
    $f(1)=\int_{\Phi_{\tp}(G)}\chi_{\varphi}(f)d\nu_G(\varphi)$ and $f'(1)=\int_{\Phi_{\tp}(G')}\chi_{\varphi'}(f')d\nu_{G'}(\varphi')$. 
\end{center}
As the second integration has its support in $\Phi_{\tp}(G)$, the formula \eqref{eROI}
implies $\nu_{G}(\varphi)=\nu_{G'}(\varphi')$ if $\varphi\mapsto \varphi'$. 
\end{proof}

We aim to apply Theorem \ref{tshelsinn} for $G'=\GL(n,\mathbb{R})$, in which case the embedding $\Phi(G)\mapsto \Phi(G')$ reduces to the local Jacquet-Langlands correspondence for real $\GL(n)$. 
We will first give this correspondence explicitly. 
The tempered dual of $\GL(n,\mathbb{R})$ and $\GL(n,\mathbb{H})$ can be described as follows. 
Observe $\GL(n,\mathbb{R})$ has square-integrable irreducible representations (modulo the center) if and only if $n=1$ or $2$. 
The set $\Pi_2(^{0}{M}_P)$ in Equation \eqref{eRPformula} is non-empty if and only if $^{0}{M}_P$ is a product of $\SL^{\pm}(2,\mathbb{R})$ and $\SL^{\pm}(1,\mathbb{R})=\{\pm 1\}$ (and the Levi factor $M={}^{0}{M}_P A$ is the products of $\GL(1,\mathbb{R})$ and $\GL(2,\mathbb{R})$). 

Hence we take $\sigma=(n_1,\cdots,n_s)$ be a partition of $n$ into $1$'s and $2$'s. 
For each $\GL(n_i,\mathbb{R})$, we take a square-integrable irreducible representation (modulo the center) which can be completely classified as follows: 
\begin{itemize}
    \item For $\GL(1,\mathbb{R})=\mathbb{R}^{\times}$, let $\chi_{t,+}(g)=|g|_{\mathbb{R}}^t$ and $\chi_{t,-}(g)=\sgn(g)\otimes |g|_{\mathbb{R}}^t$ for $t\in\mathbb{C}$.  
    \item For $\GL(2,\mathbb{R})$, let $H_k(\omega)=\omega\otimes \Ind_{\SL(2,\mathbb{R})}^{\SL^{\pm}(2,\mathbb{R})}H_k^+=\omega\otimes(H_k^+\oplus H_k^-)$ for $k\geq 1$. 
\end{itemize}
Each tempered irreducible representation $\pi$ is of the form $\Ind_{P}^{G}(\pi_1\otimes\cdots\otimes \pi_s)$ with the Levi factor $M_P\cong \prod_{1\leq i\leq s}\GL(n_i,\mathbb{R})$ and each $\pi_i$ taken from one of representations of $\GL(n_i,\mathbb{R})$ above, where $n_i=1,2$.  

Here we describe the tempered dual of $\GL(n,\mathbb{H})$. 
The maximal compact subgroup of $\GL(n,\mathbb{H})$ is $\Sp(n)$. 
It has a maximal Cartan subgroup, or equivalently $\rk \Sp(n)=\rk \GL(n,\mathbb{H})$ ($=n$) if and only if $n=1$.
Thus, $\GL(n,\mathbb{H})$ has square-integrable irreducible representations (modulo the center) if and only if $n=1$. 
They come from the following ones:
\begin{itemize}
    \item $V_k(\omega)=\omega\otimes \mathbb{C}^k$, where  $\mathbb{C}^k$ is the unique $k$-dimensional irreducible representation of $\mathbb{H}^1=\{x\in \mathbb{H}|\Nm(x)=1\}\cong\SU(2)$ and $\omega$ is a character of $\mathbb{R}_{>0}^{\times}$. 
\end{itemize}
Hence the induced representations of the form $\Ind^{G}_{\prod_{1\leq i\leq n}\mathbb{H}^{\times}}(V_{k_1}(\omega_1)\otimes \cdots \otimes V_{k_n}(\omega_n))$ give the  tempered dual of $\GL(n,\mathbb{H})$.

The local Jacquet-Langlands correspondence of $\GL(n,\mathbb{R})$ is given explicitly in \cite[\S 13]{BaduRena10} as a map on the Grothendieck groups of unitary representations: 
\begin{center}
    $\JL_{\mathbb{R}}\colon \mathbb{Z}\cdot\Pi({\GL(2n,\mathbb{R})})\to \mathbb{Z}\cdot\Pi(\GL(n,\mathbb{H}))$. 
\end{center}
The restriction of the correspondence on the tempered representations can be described as follows. 
\begin{enumerate}
    \item If each factor of the Levi subgroup $M$ is $\GL(2,\mathbb{R})$ (the partition $\sigma=(2,\cdots,2)$), then
    \begin{center}
        $\JL_{\mathbb{R}}(\Ind(H_{k_1}(\omega_1)\otimes\cdots \otimes \Ind(H_{k_n}(\omega_n)))=\Ind(\Ind(V_{k_1}(\omega_1)\otimes\cdots \otimes \Ind(V_{k_n}(\omega_n)))$
    \end{center}
    for $k_i\geq 1$ and each $\omega$ is a character of $\mathbb{R}_{>0}^{\times}$. 
    \item If there exists a factor $\GL(1,\mathbb{R})$ of the Levi subgroup $M$ (the partition $\sigma$ contains at least one $1$), then
\begin{center}
$\JL_{\mathbb{R}}(\Ind(\pi_1\otimes\cdots\otimes \pi_s))=0$
\end{center}
for any representations 
$\pi_1\otimes\cdots\otimes \pi_s$ of $M$. 
Note that $0$ stands as the zero element in the Grothendieck group but not the trivial representation. 
\end{enumerate}

As an $L$-packet of $\GL(n)$ (or its inner forms) is a single irreducible representation, Theorem \ref{tshelsinn} implies the following results. 
\begin{corollary}\label{cratioRPd}
Let $\pi,\pi'$ be nontrivial tempered irreducible representations of $\GL(2n,\mathbb{R})$ and $\GL(n,\mathbb{H})$ such that $\JL_{\mathbb{R}}(\pi)=\pi'$.
We have
\begin{center}
    $d \nu_v (\pi)=d \nu'_v(\pi')$,
\end{center}
if the Haar measures are taken to be the local Tamagawa measures above.   
\end{corollary}

\subsection{The local-global compatibility of Plancherel and Tamagawa measures}\label{scomp}

This section is devoted to the compatibility of the local Plancherel measures and the global Tamagawa measure on an adelic group (see \cite{Kott88}) with respect to the local (thus global) Jacquet-Langlands correspondence.

For a cuspidal automorphic representation, it is not measurable with respect to the Plancherel measure of an adelic group $G(\mathbb{A})$.
Thus, it is not feasible to consider the adelic Plancherel measure. 
Instead, we show that the global correspondence preserves each local Plancherel measure once the local Tamagawa measures (see \cite[\S 29]{Voight21}) are chosen as the Haar measures. 

For an $F$-group $G$, the Plancherel measure of an adelic group $G(\mathbb{A})$ is known to be the restricted product of local Plancherel measures, which is described as follows (see \cite{Y24mathZ} and \cite[\S 2]{Blkd77}). 
Let $\{(X_i,\mu_i):i\in I\}$ be a set of measure spaces. 
Let $I_0\subset I$ such that $I\backslash I_0$ is finite. 
For each $i\in I_0$, we take a subset $Y_i\subset X_i$ such that $\mu_{i}(Y_i)$ is finite and $\prod_{i\in I_0}\mu_{i}(Y_i)$ is convergent. 
For any finite subset $S\subset I$ satisfying $S\cup I_{0}=I$, we define
\begin{center}
    $(X_S,\nu_S)=(\prod_{i\in S}X_i)\times (\prod_{i\notin S}Y_i)$. 
\end{center} 
Note for $S'\subset S$, there is a natural embedding of measure spaces $X_{S'}\subset X_S$.
The {\it restricted product of $\{(X_i,\mu_i):i\in I\}$ with respect to $\{Y_i:i\in I_0\}$} is defined by
\begin{center}
$\prod'_{i\in I} (X_i,Y_i,\mu_i)=\varinjlim_S X_S=\{(x_i)_{i\in I}\in \prod X_i|x_i\in Y_i\text{~for~almost~all~}i\}$,
\end{center}
which will also be denoted simply by $(X,\nu)$. 
A subset $Z\subset X$ is measurable if and only if $Z\cap X_S$ is measurable for all finite $S$ and its measure is given by
\begin{center}
    $\mu(Z)=\sup_{S}\nu_S(Z\cap X_S)$.
\end{center} 

Now we take $I=V$, $I_0=V_f$ and $X_v=\Pi_{\tp}(G(F_v))$ together with $Y_v=\Pi_{\rm urmf}(G(F_v))$ (the set of equivalence classes of unramified irreducible representations of $G(F_v)$) for each $v\in V_f$. 
Let $\nu_v$ be the local Plancherel measure on $\Pi_{\tp}(G(F_v))$. 
Then the Plancherel measure $\nu_\mathbb{A}$ of $G(\mathbb{A})$ is the restricted product of $\{\Pi_{\tp}(G(F_v),\nu_v):v\in V\}$ with respect to $\{\Pi_{\rm urmf}(G(F_v)):i\in V_f\}$. 
Therefore, each cuspidal automorphic representation $\pi=\otimes \pi_v \in \Pi_0(G)$ is not $\nu_\mathbb{A}$-measurable as $\pi_v$ is unramified almost everywhere and thus is not measurable with respect to $\nu_v$. 
Hence, we should consider the global correspondence of each local Plancherel measure. 

We take the local Tamagawa measure $\mu_v$ as the Haar measure on the local groups $\GL(nd,F_v)$ and $\GL(n_v,D_v)$ over $F_v$ respectively, which are defined in Section \ref{ssHaarp} and Section \ref{ssHaarR}. 
For each $v$, we let $d \nu_v, d \nu'_v$ be the Plancherel density determined by $\mu_v$ with respect to the Harish-Chandra measure $d \omega_v$ on the local tempered dual $\Pi_{\tp}(\GL(n_v,D_v))$ and $\Pi_{\tp}(\GL(n_v,D_v))$ respectively. 
\begin{theorem}\label{tpreallPm}
There exists local Haar measures $\mu_v,\mu'_v$ on $G_v,G'_v$ such that 
\begin{enumerate}
    \item $\prod \mu_v,\prod \mu'_v$ are the Tamagawa measures on $G(\mathbb{A})$ and $G'(D(\mathbb{A}))$ respectively;
    \item the local Jacquet-Langlands correspondence preserves the associated Plancherel measures, i.e., $d \nu_{v}(\pi_v)=d \nu'_{v}(\pi'_v)$ if $\pi_v$ and $\pi'_v$ correspond. 
    \item the global Jacquet-Langlands correspondence of tempered cuspidal automorphic representations preserves each local Plancherel measure, i.e., $d \nu_v(\pi_v)=d \nu'_v(\pi'_v)$ for each $v$ if $\pi=\otimes_{v\in V}\pi_v$ and $\pi'=\otimes_{v\in V}\pi'_v$ correspond. 
\end{enumerate}
\end{theorem}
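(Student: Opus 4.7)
The plan is to assemble conclusions (1)--(3) using the local work from Sections \ref{sarc} and \ref{snarc}, with the only genuinely new step being the global identification of Tamagawa measures in (1); conclusions (2) and (3) will then follow almost for free.

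For (1), I take the $\mu_v$ and $\mu'_v$ to be precisely the local Tamagawa measures constructed in Section \ref{ssRHaar} (archimedean case) and Section \ref{sspHaar} (non-archimedean case). These local choices are, by construction, the self-dual measures attached to the standard additive character $\psi_{A_v}(x)=\psi_{F_v}(\tr x)$ recorded at the end of Section \ref{sspHaar}. Since $\psi_F=\prod_v \psi_{F_v}$ is the standard unramified adelic character, the standard theory of Tamagawa measures on the adele group of a central simple algebra (cf.\ \cite[\S 29.8]{Voight21} together with Weil's product formula) identifies the restricted product $\prod_v \mu_v$ with the Tamagawa measure on $G(\mathbb{A})$, and likewise $\prod_v \mu'_v$ with the Tamagawa measure on $G'(\mathbb{A})$. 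The restricted-product convergence is controlled by Corollary \ref{cTamaMaxod}: the unramified local volumes $\mu_v(O^{\times}(A_v))$ are precisely the Euler factors at $v$ of an absolutely convergent product of shifted Dedekind-type zeta values, so the product converges away from $\ram(G')$.

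For (2), the equality $\dd \nu_v(\pi_v)=\dd \nu'_v(\pi'_v)$ under $\JL_v$ is exactly the content of Proposition \ref{pratioRPd} (real places) and Proposition \ref{pratiopPd} (non-archimedean places), each proved relative to the same local Tamagawa measure $\mu_v$ chosen above. At the complex places $G_v\cong G'_v$ (every division algebra over $\mathbb{C}$ being trivial), so the statement is tautological.

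For (3), write $\pi=\otimes \pi_v$ and $\pi'=\otimes \pi'_v$ as in the statement of the global Jacquet-Langlands correspondence. At each $v\in \ram(G')$ one has $\JL_v(\pi_v)=\pi'_v$, and (2) gives $\dd\nu_v(\pi_v)=\dd\nu'_v(\pi'_v)$. At $v\notin \ram(G')$ one has $G_v\cong G'_v$ with $\mu_v=\mu'_v$ and $\pi_v\cong \pi'_v$, so the equality of Plancherel densities at $v$ holds trivially. The main obstacle I anticipate is the bookkeeping in step (1): one must match the local normalizations $\N(\disc_{\mathbb{Z}_p}(O(A_v)))^{-1/2}$ and the convergence factors $(1-1/q_v)^{-1}$ introduced in Section \ref{sspHaar} with the global discriminant of $A/F$ and the global zeta factors appearing in the standard definition of the Tamagawa measure; for the inner form $G'=\GL(n,D)$ this involves the discriminant of $D$ and interacts nontrivially with the finitely many places in $\ram(G')$, where Lemma \ref{lpNdisc} and Corollary \ref{cTamaMaxod} must be combined carefully to show that the additional factor $q_v^{-\frac{1}{2}d_v(d_v-1)n_v^2}$ at ramified places absorbs exactly the local contribution of the algebra discriminant under the product formula.
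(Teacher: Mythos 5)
Your proposal follows exactly the same route as the paper's own proof: take the local Tamagawa measures from Sections \ref{ssRHaar} and \ref{sspHaar}, cite \cite[\S 29.8]{Voight21} for the global Tamagawa identification in (1), invoke Propositions \ref{pratioRPd} and \ref{pratiopPd} for (2), and deduce (3) by combining (2) at ramified places with the isomorphism $G_v\cong G'_v$ at unramified places. The extra caution you flag about matching discriminant normalizations at ramified places is already handled inside Lemma \ref{lpNdisc}, Corollary \ref{cTamaMaxod}, and the cited passage of Voight, so no additional argument is needed.
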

\begin{proof}
It is known that the product of the local Haar measures is the Tamagawa measure by \cite[\S 29.8]{Voight21}. 
The second property follows Corollary \ref{cratioRPd} and Proposition \ref{pratiopPd}. 
The last one follows the fact that $\GL(n,D_v)\cong \GL(nd,F_v)$ for $v\notin \ram(G')$ and also the the second one for $v\in \ram(G')$. 
\end{proof}

\section{An arithmetic invariant of the correspondence}\label{sJLinv}

We show the density and the dimension over the arithmetic subgroups are preserved by the Jacquet-Langlands correspondence.

\subsection{The covolume of $S$-arithmetic subgroups}

Let $G_1,G_2$ be simply connected semisimple algebraic groups over a global field $F$. 
Assume that $G_1$ is an inner form of $G_2$, that is, they become isomorphic over 
$\overline{F}$ (or a finite extension of $F$) via an inner automorphism. 
Let $G_0$ be the quasi-split inner form of $G_1,G_2$, and $\varphi_i\colon G_i\to G_0$ be an inner form defined over some finite Galois extension $K_i$ of $F$. 
We know that $\varphi_i\circ {}^{\gamma}\varphi$ is an inner automorphism of $G$ for each $\gamma\in \Gal(K_i/F)$. 

Let $\ram(G_i)$ be the set of ramified places of $G_i$, i.e., $G_i(F_v)\cong G_0(F_v)$ if and only if $v\notin \ram(G_i)$. 
Thus, for $v\notin \ram(G_1)\cup \ram(G_2)$ , we have $G_{1}(F_v)\cong G_{2}(F_v)$.

Let $\omega_0$ be an invariant differential form on $G_0$ of the maximal degree. 
Then $\omega_i=\varphi_{i}^{*}(\omega_0)$ in an invariant differential form on $G_i$ of the maximal degree for $i=1,2$. 
Let $\mu_i$ be the Haar measure on $G_i(F)$ determined by $\omega_i$. 
For each $v\in V$ (the set of places of $F$),
we follow \cite[\S 2.1]{Prasad89} to give the local Haar measures. 
For $x\in F_v^{\times}$, we let
\begin{equation*}
    |x|_v=
    \begin{cases}
      q_v^{-v(x)}, & \text{if}\ v\in V_f; \\
      |x|, & \text{if}\ F_v=\mathbb{R};\\
      |x|^2, & \text{if}\ F_v=\mathbb{C}.
    \end{cases}
  \end{equation*}
Thus, the differential form $\omega_i$, together with the normalized absolute value $|~|_v$ on $F_v$, determines a Haar measure on $G_i(F_v)$. 
We denote this Haar measure by $\mu_{i,v}$ for $i=1,2$. 
We call the Haar measures  $\mu_{1,v},\mu_{2,v}$ on $G_{1}(F_v),G_{2}(F_v)$ (resp., $\mu_1,\mu_2$ on $G_{1}(F),G_{2}(F)$) {\it compatible} if they are constructed from the same $\omega_0$ on the quasi-split form $G_0$ as above. 

Let $S$ be a finite set of places of $F$ containing all the archimedean ones and $\mathcal{O}_S$ be the ring of $S$-integers. 
We define
\begin{center}
    $\Gamma_i(S)=G_i(\mathcal{O}_S)$
\end{center}
the $S$-arithmetic subgroup of $G_i(F_S)=\prod_{v\in S}G_i(F_v)$ for $i=1,2$. 
We equipped $G_i(F_S)$ with the product measure $\mu_{i,S}=\prod_{v\in S}\mu_{i,v}$

\begin{proposition}\label{pStama}
Let $G_1,G_2$ be simply connected semisimple algebraic groups over a number field $F$ such that they are inner forms of each other. 
Suppose $\mu_{1,v},\mu_{2,v}$ are compatible Haar measures on $G_1(F_v),G_2(F_v)$. 
Let $\mu_{1,S}=\prod_{v\in S}\mu_{1,v}$ and $\mu_{2,S}=\prod_{v\in S}\mu_{2,v}$. 
We have
\begin{center}
    $\mu_{1,S}(G_{1}(F_S)/\Gamma_{1}(S))=\mu_{2,S}( G_{2}(F_S)/\Gamma_{2}(S))$
\end{center}
if $\ram(G_1)\cup \ram(G_2)\subset S$ and both $G_1(F_S),G_2(F_S)$ are non-compact.  
\end{proposition}
\begin{proof}
By the strong approxiamation property (see\cite[Theorem A]{Prasad77strongappr}), we know
\begin{equation}\label{estrongappr}
    G_{i}(F_S)\cdot \prod_{v\notin S}P_{i,v}\cdot G_{i}(F)=G_{i}(\mathbb{A}),
\end{equation}
where $\{P_{i,v}\}_{v\in V_f}$ is a collection of parahoric subgroups $P_{i,v}$ of $G_{i}(F_v)$ such that $\prod_{v\in V_{\infty}}G_{i}(F_v)\cdot \prod_{v\in V_f}P_{i,v}$ is open in $G_{i}(\mathbb{A})$. 
We take the collection $\{P_{i,v}\}_{v\in V_f}$ such that $\Gamma_{i}(S)=G_{i}(F)\cap \prod_{v\notin S_f}P_{i,v}$. 
Thus, \eqref{estrongappr} implies a fibration
\begin{center}
    $G_i(\mathbb{A})/G_{i}(F)\to G_{i}(F_S)/\Gamma_{i}(S)$,
\end{center}
whose fiber can be identified with $\prod_{v\notin S}P_{i,v}$. 
Hence we obtain
\begin{center}
    $\mu_{i,S}(G_{i}(F_S)/\Gamma_{i}(S))=D_{F}^{\frac{1}{2}\dim G_i}\tau_{F}(G_i)\cdot (\prod_{v\notin S}\mu_{i,v}(P_{i,v}))^{-1}$,
\end{center}
where $D_{F}$ denotes the discriminant of $F$ and $\tau_{F}(G_i)$ is the Tamagawa number of $G_i$. 

For $s\notin \ram(G_1)\cup \ram(G_2)$, we have both $G_{i}(F_v)$ and $G_{i}(F_v)$ isomorphic to the quasi-split group $G_{0}(F_v)$ and the last terms coincide. 
By \cite{Kott88}, it is known that $\tau_F(G_i)=\tau_F(G_0)$ for $i=1,2$. 
Hence we obtain $\mu_{1,S}( G_{1}(F_S)/\Gamma_{1}(S))=\mu_{2,S}(G_{2}(F_S)/\Gamma_{2}(S))$. 
\end{proof}

We go back to the case $G=\GL(n,F)$ and $G'$ is an inner form of $G$. 
Then we let $\overline{G}=\PGL(n,F)$ and $\overline{G'}=G'/Z(G')$ is an inner form of $\overline{G}$. 
The Haar measures $\mu_v,\mu'_v$ are taken to be the ones constructed in Section \ref{ssHaarR} and Section \ref{ssHaarp}, which are compatible (see \cite[\S 29]{Voight21}). 
Let $\mu_S,\mu'_S$ be the products of these local Haar measures over $v\in S$. 
\begin{corollary}\label{cPGLhaar=}
If $\ram(\overline{G'})\subset S$, 
$\mu_{S}(\PGL(n,F_S)/ \PGL(n,\mathcal{O}_S))=\mu'_{S}(\overline{G'}(F_S)/ \overline{G'}(\mathcal{O}_S))$
\end{corollary}
\begin{proof}
Let $R$ be a commutaive ring and consider the following short exact sequence
\begin{center}
    $1\xrightarrow[]{} \SL(n,R)\xrightarrow[]{}\GL(n,R)\xrightarrow[]{\eta}  R^{\times}\to 1$,
\end{center}
where $\eta$ is the determinant map. 
Let $\overline{\eta}$ denote the induced map $\PGL(n,R)\to R^{\times}/(R^{\times})^{n}$. 
It is known that $\ker \overline{\eta}=\PSL(n,R)$ and
\begin{center}
    $\PGL(n,R)/\PSL(n,R)\cong R^{\times}/(R^{\times})^{n}$. 
\end{center}
On the other hand, $\SL(n,R)/\mu_n(R)\cong \PSL(n,R)$, where $\mu_{n}(R)$ is the group of $n$-th roots of unity in $\mathbb{R}^{\times}$.

Let $\omega,\omega'$ be the invariant differential forms of top degree on $\overline{G}(F)=\PGL(n,F),\overline{G'}$ which determines the compatible local Haar measures $\mu_v,\mu'_{v}$. 

We first consider the split case, i.e., $\overline{G}=\PGL(n)$. 
Note that both the maps $\SL(n,R)\to \PSL(n,R)$ and $\PGL(n,R)\to \PSL(n,R)$ give coverings of finite index when $R$ is a local field. 
We may pull back $\omega$ to the larger groups. It gives compatible Haar measures on $\SL(n,F_v),\PSL(n,F_v)$, which we will also denote by $\mu_{v}$.
Thus, we obtain
\begin{equation}\label{ePGL-SL}
\begin{aligned}
    \mu_{S}(\PGL(n,F_S)/\PGL(n,\mathcal{O}_S))&=\frac{[\PGL(n,F_S):\PSL(n,F_S)]}{[\PGL(n,\mathcal{O}_S):\PSL(n,\mathcal{O}_S)]}\cdot \mu_{S}(\PSL(n,F_S)/\PSL(n,\mathcal{O}_S))\\
    &=\frac{[F_S^{\times}:(F_S^{\times})^n]}{[\mathcal{O}_S^{\times}:(\mathcal{O}_S^{\times})^n]}\cdot \frac{1}{[\mu_n(F_S):\mu_n(\mathcal{O}_S)]}\cdot \mu_{S}(\SL(n,F_S)/\SL(n,\mathcal{O}_S)). 
\end{aligned}
\end{equation}

For the non-split group $G'$ and $\overline{G'}=G'/Z(G')$, we have the reduced norm $\nrd$ which gives the short exact sequence
\begin{center}
    $1\xrightarrow[]{} G'^{(1)}(R)\xrightarrow[]{}G'(R)\xrightarrow[]{\nrd}  R^{\times}\to 1$,
\end{center}
where $G'^{(1)}=\{g\in G'|\nrd(g)=1\}$. 
Let $\overline{\nrd}$ denote the induced map on $(G'/Z(G'))(R)\to R^{\times}/(R^{\times})^{n}$.
We denote $(G'^{(1)}/Z(G'^{(1)}))$ by $\overline{G'^{(1)}}$. 
We know that $\ker\nrd=\overline{G'^{(1)}}(R)$, which is an inner form of $\PSL(n,R)$ when $R$ is a local field.

On the other hand, we have $G'^{(1)}(R)/\mu_{n}(R)\cong \overline{G'^{(1)}}(R)$. 
The same arguments as in Equation \eqref{ePGL-SL} give
\begin{equation}\label{ePG'G'1}
\begin{aligned}
    \mu'_{S}(\overline{G'}(n,F_S)/\overline{G'}(n,\mathcal{O}_S))&=\frac{[\overline{G'}(n,F_S):\overline{G'^{(1)}}(n,F_S)]}{[\overline{G'}(n,\mathcal{O}_S):\overline{G'^{(1)}}(n,\mathcal{O}_S)]}\cdot \mu_{S}(\overline{G'^{(1)}}(n,F_S)/\overline{G'^{(1)}}(n,\mathcal{O}_S))\\&=\frac{[F_S^{\times}:(F_S^{\times})^n]}{[\mathcal{O}_S^{\times}:(\mathcal{O}_S^{\times})^n]}\cdot \frac{1}{[\mu_n(F_S):\mu_n(\mathcal{O}_S)]}\cdot \mu'_{S}(G'^{(1)}(n,F_S)/G'^{(1)}(n,\mathcal{O}_S)). 
\end{aligned}
\end{equation}
By Theorem \ref{pStama}, the comparison of Equations \eqref{ePGL-SL} and \eqref{ePG'G'1} gives the desired formula. 
\end{proof}

\subsection{The correspondence of densities over $S$-arithmetic groups}

Let $G=\GL(n)$ and $G'=D^{\times}$ (the multiplicative group of a central division algebra over $F$ of dimension $F$). 
Let $S$ be a finite set of places which contains $V_{\infty}$. 
We define two $S$-arithmetic groups
\begin{center}
    $\Gamma_{S}=\overline{G}(\mathcal{O}_S)$ and $\Gamma'_{S}=\overline{G'}(\mathcal{O}_S)$,
\end{center}
which are usually called the {\it principal $S$-arithmetic subgroups} of the semisimple quotient groups $\overline{G}=\PGL(n)$ and $\overline{G'}=G'/Z(G')$. 

When passing from an ordinary representation $\pi$ of a group to the projective representation of the quotient group, we let $\sigma(\pi)$ denote the $2$-cocycle associated with $\pi$.  
Note that for $S$-arithmetic groups, their second cohomology groups are not trivial in general (see \cite{AdemNaf98}). 
\begin{theorem}\label{ttpvndim}
    Suppose $\pi,\pi'$ are cuspidal automorphic representations of $G(\mathbb{A})$ and $G'(\mathbb{A})$ that are matched by the Jacquet-Langlands correspondence.   
We have
\begin{center}
$d_{\Gamma_S,\sigma(\pi_S)}\pi_S=d_{\Gamma'_S,\sigma(\pi'_S)}\pi'_S$
\end{center}
for any finite set $S$ of places such that  $\ram(G')\subset S$, and both $G(F_S),G'(F_S)$ are not compact modulo the center.  
\end{theorem}
\begin{proof}
By Theorem \ref{tpreallPm}, the local and global Jacquet-Langlands correspondence preserves each local Plancherel measure $\nu_v$ (resp. $\nu'_v$) of $G(F_v)$ (resp. $G'(F_v)$). 
Thus, we know $d\nu_v(\pi_v)=d\nu'_v(\JL_{v}(\pi_v))$ for all place $v$.

Note that  $\pi_S,\pi'_S$ give projective representations of the projective groups $\overline{G}_S$ and $\overline{G'}_S$.  
By Theorem \ref{tcenextPfml}, there is a decomposition of $\nu_v$ into the Plancherel measures of the center $Z(G_v)$ and of the projective representations of the quotient group $\overline{G_v}=G_v/Z(G_v)$: 
\begin{center}   $d\nu_v(\pi(\tau,\chi))=d\nu_{\overline{G_v},\overline{\sigma_{\chi}}}(\tau)d \nu_{Z(G_v)}(\overline{\chi})$,
\end{center}
where $\chi\in\widehat{Z(G_v)}$ and $\tau\in \Pi(\overline{G_v},\sigma_{\chi})$. 
Similarly, for the decomposition of $\nu'_v$ on $G'_v$, we have
\begin{center} $d\nu'_v(\pi(\tau',\chi'))=d\nu_{\overline{G'_v},\overline{\sigma_{\chi'}}}(\tau')d \nu_{Z(G'_v)}(\overline{\chi'})$
\end{center}
where $\chi'\in\widehat{Z(G'_v)}$ and $\tau'\in \Pi(\overline{G'_v},\sigma_{\chi'})$. 
 
Since local Jacquet-Langlands correspondence preserves central characters,  
the $\sigma_{\chi}$-projective Plancherel measure $\nu_{\overline{G_v},\overline{\sigma_{\chi}}}(\tau)$  is also preserved by the local Jacquet-Langlands correspondence. 
Now we apply Theorem \ref{tdimmeas} for groups $\overline{G_v}$ and $\overline{G'_v}$. 
It then follows Corollary \ref{cPGLhaar=} for the factors of covolumes.  
\end{proof}

\begin{remark}
\begin{enumerate}
    \item The last condition on the non-compactness is equivalent to saying that there exists $v\in S$ such that both $G_v,G'_v$ are isotropic (modulo the center), which is necessary for the strong approximation property. 
    It is also equivalent to the condition that $\Gamma_S,\Gamma'_S$ are both infinite. 
    \item While the theorem applies to tempered representations, we can take the density of non-tempered representations to be $0$. 
    \item The Ramanujan-Petersson conjecture states that each cuspidal automorphic representation of $\GL(n)$ is tempered.
This was proved completely for function field cases by Drinfeld \cite{Drin88} for $n=2$ and by L. Lafforgue \cite{LafL02} in general.  
It remains open for number fields except for a few special families of cuspidal representations (see \cite{Lww2020}).
\end{enumerate}

\end{remark}

\begin{corollary}\label{csqvndim}
Suppose $\pi,\pi'$ are cuspidal automorphic representations of $G(\mathbb{A})$ and $G'(\mathbb{A})$ that are matched by the Jacquet-Langlands correspondence. 
Assuming both $\pi_S,\pi'_S$ are square-integrable representations, we have
\begin{center}
    $\dim_{\mathcal{L}(\Gamma_S,\sigma(\pi_S))}\pi_S=\dim_{\mathcal{L}(\Gamma'_S,\sigma(\pi'_S))}\pi'_S$. 
\end{center}
if $\ram(G')\subset S$ and both $G(F_S),G'(F_S)$ are not compact modulo their centers.
\end{corollary}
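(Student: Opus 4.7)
The plan is to deduce this corollary from Theorem \ref{ttpvndim} by invoking the Ramanujan-Petersson conjecture to remove any remaining temperedness hypothesis. Theorem \ref{ttpvndim} already delivers the equality of von Neumann dimensions once we know both $\pi = \otimes_v \pi_v$ and $\pi' = \otimes_v \pi'_v$ are tempered at every place, because the Plancherel-measure compatibility in Theorem \ref{tpreallPm}(3), on which the proof of Theorem \ref{ttpvndim} rests, is formulated for tempered cuspidal automorphic representations. So the task reduces to verifying temperedness at every place.

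First I would invoke the Ramanujan-Petersson conjecture for $G = \GL(nd)$ over the global field $F$: every cuspidal automorphic representation $\pi = \otimes_v \pi_v$ of $\GL(nd,\mathbb{A})$ has each component $\pi_v$ tempered. Over function fields this is a theorem of Drinfeld for $n = 2$ and of L. Lafforgue in general, so the corollary is unconditional in that setting. For number fields it remains open outside known special cases, so there the corollary holds for any family in which Ramanujan-Petersson is established.

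Next I would transport temperedness across the Jacquet-Langlands correspondence to $\pi'$. At the split places $v \notin \ram(G')$ we have $\pi'_v \cong \pi_v$, and temperedness is automatic. At the ramified places the local correspondence $\JL_v$ matches characters on regular semisimple elements and restricts to a correspondence on the tempered duals, so $\pi_v$ being tempered forces $\pi'_v$ to be tempered as well. Hence both global representations are everywhere tempered, and the hypotheses of Theorem \ref{ttpvndim} are met; the desired equality of von Neumann dimensions follows at once.

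The only genuine obstacle is the input from Ramanujan-Petersson in the number-field case, which is an external and still-open conjecture; no new difficulty arises inside the argument itself.
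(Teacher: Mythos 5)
Your reading matches the paper's intent: the introduction's Theorem \ref{tmain2} (the avatar of Theorem \ref{ttpvndim}) carries an explicit temperedness hypothesis, and the paragraph the paper inserts between Theorem \ref{ttpvndim} and Corollary \ref{csqvndim} is precisely the Ramanujan-Petersson discussion you use to discharge it. Your deduction — temperedness of every $\pi_v$ from Ramanujan-Petersson (unconditional for function fields via Drinfeld and L.~Lafforgue, conditional for number fields), transported to $\pi'_v$ through $\JL_v$ at ramified places and the identity at split places, then an application of Theorem \ref{ttpvndim} — is exactly the argument the paper leaves implicit, so the proposal is correct.
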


For $n> 2$, the last condition on non-compactness is always satisfied. 
If $F$ is a number field and $V_{\infty}\subset S$, a necessary condition for $\pi_S$ being a square-integrable irreducible representation is $F$ totally real and $G=\GL(2)$. 
This comes from the fact that $\GL(n,\mathbb{C})$ has square-integrable irreducible representations if and only if $n=1$ while $\GL(n,\mathbb{R})$ has square-integrable irreducible representations if and only if $n=1,2$.

{
{
\setstretch{0.2}
\bibliographystyle{abbrv}
\typeout{}
\bibliography{MyLibrary} 
}
}

\textit{E-mail address}: \href{mailto:junyang@fas.harvard.edu}{junyang@fas.harvard.edu}

\end{document}